\newtheorem{theorem}{Theorem}[section]
\newtheorem{lemma}[theorem]{Lemma}
\newtheorem{proposition}[theorem]{Proposition}
\newtheorem{corollary}[theorem]{Corollary}
\theoremstyle{definition}
\theoremstyle{remark}
\newcommand{\C}{\mathbb{C}} 
\newcommand{\R}{\mathbb{R}} 
\newcommand{\N}{\mathbb{N}} 
\newcommand{\Z}{\mathbb{Z}} 
\newcommand{\Q}{\mathbb{Q}} 
\newcommand{\T}{\mathbb{T}} 
\newcommand{\Qp}{\mathbb{Q}_{p}} 
\newcommand{\Zp}{\mathbb{Z}_{p}} 
\newcommand{\AQ}{\mathbb{A}_{\Q}} 
\newcommand{\A}{\mathbb{A}} 
\newcommand{\AQfin}{\mathbb{A}_{\Q,\textnormal{fin}}} 
\newcommand{\Prime}{\mathbb{P}} 
\newcommand{\SO}{\textnormal{\textbf{S}}_{0}} 
\newcommand{\SOprime}{\textnormal{\textbf{S}}_{0}'}
\newcommand{\ghat}{\widehat{G}}
\newcommand{\s}{\textnormal{s}}
\newcommand{\La}{\Lambda}
\newcommand{\la}{\lambda}
\newcommand{\lmodule}{\mathcal{A}}
\newcommand{\rmodule}{\mathcal{B}}
\DeclareBoldMathCommand\boldlangle{\left\langle} 
\DeclareBoldMathCommand\boldrangle{\right\rangle}
\newcommand{\lhs}[2]{\prescript{}{\lmodule\!}{\boldlangle #1,#2\boldrangle}}
\newcommand{\rhs}[2]{{\boldlangle #1,#2\boldrangle}_{\!\rmodule}}
\title{Time-frequency analysis on the adeles over the rationals}
\author{Ulrik B.\ R.\ Enstad\thanks{University of Oslo, Department of Mathematics, Oslo, Norway, \mbox{E-mail: \protect\url{ubenstad@math.uio.no}}}, Mads S.\ Jakobsen\thanks{Norwegian University of Science and Technology, Department of Mathematical Sciences, Trondheim, Norway, \mbox{E-mail: \protect\url{mads.jakobsen@ntnu.no}; \protect\url{franz.luef@ntnu.no}}}, Franz Luef\footnotemark[2]}
\begin{document}
\date{}
\maketitle

\begin{abstract}
We show that the construction of Gabor frames in $L^{2}(\mathbb{R})$ with generators in $\mathbf{S}_{0}(\mathbb{R})$ and with respect to time-frequency shifts from a rectangular lattice $\alpha\mathbb{Z}\times\beta\mathbb{\Z}$ is equivalent to the construction of certain Gabor frames for $L^{2}$ over the adeles over the rationals and the group $\mathbb{R}\times\mathbb{Q}_{p}$. Furthermore, we detail the connection between the construction of Gabor frames on the adeles and on $\R\times\Qp$ with the construction of certain Heisenberg modules.
\end{abstract}

\section{Introduction}

The theory of Gabor systems and their frame properties is available on any locally compact abelian (LCA) group \cite{jale16-2}. However, the construction of explicit examples of Gabor frames with time-frequency shifts from lattices (discrete and co-compact subgroups) is mostly  restricted to the \emph{elementary} LCA groups $\R$, $\Z$, $\T$ and $\Z/d\Z$, $d\in\N$. While the group of the $p$-adic numbers $\Qp$ does not contain any lattice, the group $\R\times\Qp$ and also the adele group over the rationals, $\AQ$, do contain discrete and co-compact subgroups. This makes these groups eligible for Gabor analysis. 

Other efforts to do time-frequency and time-scale (wavelet) analysis on local fields include \cite{ahshsh18,alevsk10,bebe18,kisk10,liji07,sh16,sh18} and \cite{shsk18}.

We mention that \cite{chgo15} provides a method of constructing Gabor frames on any group with time-frequency shifts from lattices. In case of the $p$-adics or other groups that do not contain lattices other methods of building somewhat structured families of functions with the use of \emph{quasi-lattices} have been suggested in \cite{grst07}.

In this paper we combine established theory on the $p$-adic numbers, the adeles, Gabor analysis and the theory of the Feichtinger algebra and the modulation spaces.
Our main result (Theorem \ref{th:gab-frame-for-adele}) shows that the construction of (dual) Gabor frames for $L^{2}(\R)$ with generators in the Feichtinger algebra $\SO(\R)$ is equivalent to both (I) the construction of certain  (dual) Gabor frames for $L^{2}(\AQ)$ with generators in $\SO(\AQ)$ and (II) the construction of certain (dual) Gabor frames for $L^{2}(\R\times\Qp)$ with generators in $\SO(\R\times\Qp)$.

In Section \ref{sec:padic} we describe the groups $\Qp$ and $\AQ$.
The \emph{Feichtinger algebra} on the $p$-adic groups $\Zp$ and $\Qp$, the adeles $\AQ$, and $\R\times\Qp$ is described in Section \ref{sec:SO}. Finally, in Section \ref{sec:gabor} we state and prove our main result. In Section \ref{sec:projections} we connect the construction of dual Gabor frames in $L^{2}(\AQ)$ and $L^{2}(\R\times\Qp)$ with the construction of idempotent elements in
twisted group $C^*$-algebras of the groups $\AQ$ and $\R \times \Q_p$ as explained in \cite{jalu18}. In this way we answer  the question raised in \cite[Remark 5.8]{lapa13}. This work is related to the investigation of noncommutative solenoids by Latr\'emoli\`ere and Packer \cite{lapa13,lapa15,lapa17} and the work of Larsen and Li \cite{lali12-1}.

We also mention that after submitting the preprint for this paper, we have discovered a simultaneous generalization of the main result to Gabor frames over $L^2(\R \times \Omega_a)$, where $\Omega_a$ denotes the group of $a$-adic integers \cite[p.\ 184]{hani98}, \cite[Section 10]{hero63}. These groups generalize both the $p$-adic numbers and the finite adeles over the rationals, but do not appear often in the literature. An example of an application of these groups in operator algebras can be found in \cite{omland_aadic}.

\section{The $p$-adic numbers and adeles over the rationals}
\label{sec:padic}
In this section we give a brief exposition of the field of $p$-adic numbers and the adele group $\A_\Q$ over the rational numbers.
Given a prime number $p$, the \emph{$p$-adic absolute value} on $\Q$ is defined by
\[ |x|_p = p^{-k} \]
where $x = p^k(a/b)$ and $p$ divides neither $a$ nor $b$. One also sets $|0|_p = 0$. The $p$-adic absolute value satisfies a strengthened version of the triangle inequality (the ultrametric triangle inequality), namely
\begin{equation}
| x + y |_p \leq \max \{ |x|_p, |y|_p \}. \label{eq:ultrametric}
\end{equation}
The completion of $\Q$ with respect to the metric $d_p(x,y) = |x-y|_p$ is a field denoted by $\Q_p$ and its elements are called \emph{$p$-adic numbers}. The topology inherited from the metric makes $\Q_p$ into a locally compact Hausdorff space. In particular the $p$-adic numbers $\Qp$ form a (non-compact) locally compact abelian group with respect to the topology induced by the above metric and under addition. One can show that every $p$-adic number $x$ has a \emph{$p$-adic expansion} of the form
\[ x = \sum_{k=-\infty}^\infty a_k p^k, \]
where $a_k \in \{ 0, \ldots, p-1 \}$ for each $k$ and there exists some $n \in \Z$ such that $a_k =0$ for all $k < n$. The sequence $(a_k)_{k \in \Z}$ in this expansion is unique.

\paragraph{The $p$-adic integers.}
The closed unit ball in $\Q_p$ is denoted by $\Z_p$ and its elements are called \emph{$p$-adic integers}. Because of (\ref{eq:ultrametric}) and the multiplicativity of $| \cdot |_p$, $\Z_p$ is a subring of $\Q_p$. In terms of $p$-adic expansions, a $p$-adic number $x = \sum_{k \in \Z} a_k p^k$ is a $p$-adic integer if and only if $a_k = 0$ for $k < 0$. The map $ \{ 0, \ldots, p-1 \}^{\N} \to \Z_p$ given by $(a_k)_k \mapsto \sum_k a_k p^k$ is a homeomorphism, which shows that $\Z_p$ has the topology of a Cantor set. In particular, $\Z_p$ is a compact subgroup of $\Q_p$. But $\Z_p$ is also open in $\Q_p$. Indeed, if $x \in \Z_p$, then using (\ref{eq:ultrametric}) one shows that the open ball $B_{1/2}(x) = \{ y \in \Q_p : |y-x|_p < 1/2 \}$ is contained in $\Z_p$.


We take the Haar measure $\mu_{\Qp}$ on $\Q_p$ so that $\mu_{\Qp}(\Zp)=1$. The Haar measure on $\Z_p$ is the one on $\Q_p$ restricted to $\Z_p$.

\paragraph{The dual groups of $\Q_p$ and $\Z_p$.}
Denote by $\Z[1/p]$ the subring of $\Q$ consisting of rational numbers of the form $a/p^k$ where $k,a\in \Z$. Then $\Q_p = \Z_p + \Z[1/p]$ and $\Z_p \cap \Z[1/p] = \Z$, so that
\[ \Q_p / \Z_p = \frac{\Z_p + \Z[1/p]}{ \Z_p} \cong \frac{ \Z[1/p] }{ \Z_p \cap \Z[1/p] } = \Z[1/p] / \Z \]
as abelian groups. Denote the quotient map $\Q_p \to \Z[1/p]/\Z$ by $x \mapsto \{ x \}_p$. In terms of $p$-adic expansions, we have $\left\{ \sum_{k \in \Z} a_k p^k \right\}_p = \sum_{k=-\infty}^{-1} a_k p^k + \Z$ (observe that for any $p$-adic number $x$ only finitely many of the $a_{k}$ are nonzero). Every character $\omega \in \widehat{\Q}_p$ is of the form
\[ \omega: \Qp \to \C, \ \omega(x) = e^{2\pi i \{ xy \}_p }, \ x \in \Qp, \]
for some $y \in \Q_p$. In fact, the map $\Q_p \to \widehat{\Q}_p$ given by mapping $y$ to the $\omega$ defined above is a topological isomorphism. Hence the Pontryagin dual of $\Q_p$, $\widehat{\Q}_{p}$, can be identified with $\Q_p$ itself. We will use this particular identification for the rest of the paper. 

Recall that the \emph{annihilator} of a closed subgroup $H$ of a locally compact abelian group $G$ is defined by
\[ H^{\perp} = \{ \omega\in \widehat{G} \, : \, \omega(h) = 1 \ \ \text{for all} \ \ h\in H \, \}.\]
In general we have topological isomorphisms $\widehat{H} \cong \widehat{G}/H^{\perp}$ and $\widehat{G/H} \cong H^{\perp}$. Under the identification of $\widehat{\Q}_p$ with $\Q_p$, the annihilator $\Z_p^{\perp}$ of the $p$-adic integers $\Zp$ is identified with $\Z_p$ itself. Hence the dual group $\widehat{\Z}_p$ is isomorphic to $\widehat{\Q}_p/\Z_p^{\perp} \cong \Q_p / \Z_p \cong \Z[1/p]/\Z$. There is another interesting description of $\widehat{\Z}_p$, namely it is isomorphic to the Prüfer $p$-group $\Z(p^\infty)$ which is the subgroup of $\T$ consisting of all $p^n$-th roots of unity as $n$ ranges over all $n=0, 1,2 \ldots$,
\begin{equation} \label{eq:0407a} \Z(p^{\infty}) = \big\{ z \in \C \, : \, z = e^{2\pi i k p^{-n} } , \ k=1,\ldots, p^{n}, \ n\in\N_{0} \, \big\}.\end{equation}
Observe that any $z\in \Z(p^{\infty})$ can be uniquely written as 
$z = \exp ( 2\pi i \sum_{k=1}^{\infty} a_{k} \, p^{-k} )$ where each $a_{k}\in \{0, 1, \ldots, p-1\}$ and only finitely many $a_{k}$ are non-zero. This allows us to identify every element $z\in \Z(p^{\infty})$ with a unique $p$-adic number, also to be denoted by $z$, given by $z=\sum_{k=1}^{\infty} a_{k} \, p^{-k}$. Every element $z\in \Z(p^{\infty})$ defines a character $\omega_{z} \in \widehat{\Z}_{p}$ via
\begin{equation} \label{eq:0907a} \omega_{z}: \Zp\to\C, \ \omega_{z}(x) = e^{2\pi i \{ x  z\}_{p}}, \ x\in \Zp. \end{equation}
Conversely, every element in $\widehat{\Z}_{p}$ is given in this way for some $z\in \Z(p^{\infty})$.


\paragraph{Restricted products.}
Let $(G_{i})_{i\in I}$ be a collection of locally compact abelian groups for some index set $I$. If for each $i\in I$ the group $H_i$ is a compact and open subgroup of $G_i$, then the \emph{restricted product} of the groups $(G_i)_{i\in I}$ (with respect to the $H_i$) is defined to be
\[ G = \sideset{}{^{H_i}}\prod_{i \in I} G_i = \Big\{ (x_i)_{i \in I} \in \prod_{i\in I}G_{i} \ : \ \text{$x_i \in H_i$ for all but finitely many $i \in I$ } \Big\} .\]
The correct topology in order to turn $G$ into a locally compact group is the one with a basis of open sets of the form $\prod_{i \in I} U_i$, where each $U_i$ is open in $G_i$ and $U_i = H_i$ for all but finitely many $i \in I$. This is called the \emph{restricted product topology}. If each $G_i$ is locally compact and each $H_i$ is compact, then the restricted product is a locally compact group \cite[Proposition 5-1(i)]{rava99}. Moreover, every character $\omega \in \widehat{G}$ is of the form
\[ \omega((x_i)_{i \in I}) = \prod_{i \in I} \omega_i(x_i), \]
where $(\omega_i)_{i \in I}$ is an element of the restricted product $\displaystyle\sideset{}{^{H_i^{\perp}}}\prod_{i \in I} \widehat{G}_{i}$, \cite[Theorem 5-4]{rava99}. This gives us an identification
\[ \bigg(\sideset{}{^{H_i}}{\prod\limits_{i \in I}} G_i\bigg)\!{\widehat{\phantom{\Big\langle L}}} \cong \sideset{}{^{H_i^{\perp}}}\prod_{i \in I} \widehat{G}_{i} .\]

If for each $i\in I$ the Haar measure on $G_{i}$ is $\mu_i$, then the product measure $\mu = \prod_{i \in I} \mu_i$ gives a Haar measure on the restricted product of the $G_i$. Typically the Haar measure on each $G_{i}$ is normalized so that $\mu_i(H_i) = 1$.

\paragraph{The adeles.}
The group of \emph{finite adeles} is the restricted product
\[ \AQfin = \sideset{}{^{\Z_p}}\prod_{p \in \Prime} \Q_p .\]
As $\widehat{\Q}_{p}\cong \Qp$ and $\Zp^{\perp}\cong \Zp$, it follows from the previous paragraph that $\AQfin$ is a self-dual locally compact abelian group. 
The group of \emph{adeles} is 
\[ \AQ = \R \times \AQfin. \]
It is also a self dual locally compact abelian group. We write elements of the adeles as $(x_\infty, (x_p)_p)$ where $x_\infty \in \R$ and $(x_p)_p \in \AQfin$.
The Haar measure on $\AQ$ is the product measure of the Lebesgue measures on $\R$ and the measure on all the $p$-adics $\Qp$ (normalized for each $p$ so that $\mu_{\Qp}(\Zp)=1$).

Every element $y=(y_{\infty},(y_{p})_{p})\in \AQ$ defines a character $\omega_{y}\in \widehat{\A}_{\Q}$ via
\begin{equation} \label{eq:2006a} \omega_{y} : \AQ\to \T, \ x=\big(x_{\infty},(x_{p})_{p}\big) \mapsto e^{2\pi i x_{\infty} y_{\infty}} \cdot \prod_{p\in \Prime} e^{-2\pi i \{ x_{p} y_{p}\}_{p}}.\end{equation}
Moreover, every character on $\AQ$ is given in such a way by some $y\in \AQ$. 
The minus in the exponential for the $p$-adics is not necessary for this identification. It is however required for the following neat fact: we will see in a moment that the rationals $\Q$ can be embedded into $\AQ$ as a discrete and co-compact subgroup. By the identifation of $\AQ$ with $\widehat{\A}_{\Q}$ above one has $\Q^{\perp} =\Q$. The identification in \eqref{eq:2006a} is the same that is used in  \cite[Satz 5.4.2]{de10-3}. In \cite[Section 4.3.7]{mapa05} the identification of $\AQ$ with $\widehat{\A}_{\Q}$ is done with the minus in the exponential for the real part.


\paragraph{Lattices in the adeles.}
A subgroup $H$ of an abelian topological group $G$ is called a \emph{lattice} if it is discrete and the quotient group $G/H$ is compact. A \emph{fundamental domain} for $H$ in $G$ is a Borel set $B \subseteq G$ such that every $x \in G$ can be uniquely written as $x = b+h$ where $b \in B$ and $h \in H$. Fundamental domains always exist for lattices in abelian groups \cite[Lemma 2]{kaku98}. The \emph{size} or \emph{covolume} of a lattice $H$ in $G$, denoted by $s(H)$, is the measure $\mu(B)$ of any fundamental domain $B$ for $H$ in $G$.

Naturally, there is an abundance of lattices in $\R$ (they are all of the form $H=\alpha\Z$, $\alpha\in\R\backslash\{0\}$). The $p$-adic numbers contain \emph{no} lattices (the only co-compact subgroup in $\Qp$ is $\Qp$ itself, and the only discrete subgroup of $\Qp$ is the trivial subgroup $\{0\}$). However, both $\AQ$ and $\R\times\Qp$ do contain lattices.

\begin{proposition}\label{prop:lattice_in_adeles}
For any $\alpha \in \R \setminus \{ 0 \}$ the mapping
\[ \varphi_{\alpha}: \Q\to\AQ, \ \varphi_{\alpha}(q) = \big(\alpha q, (q)_{p} \big)\]
embeds $\Q$ as a lattice into $\AQ$. The set $B_{\alpha} = [0,|\alpha|) \times \prod_{p \in \Prime} \Z_p$
is a fundamental domain for $\varphi_{\alpha}(\Q)$ in $\AQ$ and $s(\varphi_{\alpha}(\Q))=\vert \alpha\vert$. 
Moreover, under the identification of $\AQ$ with $\widehat{\A}_{\Q}$ as in \eqref{eq:2006a}, the group $\varphi_{\alpha}(\Q)^{\perp}$ can be identified with $\varphi_{1/\alpha}(\Q)$.
\end{proposition}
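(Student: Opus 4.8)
The plan is to handle the four assertions in turn, with essentially all the substance concentrated in the fundamental-domain claim, from which discreteness, co-compactness and the covolume follow cheaply. First I would record that $\varphi_\alpha$ is a well-defined injective homomorphism: writing $q=a/b$ in lowest terms, $q\in\Z_p$ for every prime not dividing $b$, so $(q)_p\in\Z_p$ for all but finitely many $p$ and $\varphi_\alpha(q)\in\AQ$; additivity is immediate and injectivity holds because $\alpha\neq 0$. The heart of the argument is the existence half of the fundamental-domain statement. Given $x=(x_\infty,(x_p)_p)\in\AQ$, I would first clear all $p$-adic denominators with a single rational: for each $p$ let $r_p\in[0,1)\cap\Z[1/p]$ represent the fractional part $\{x_p\}_p\in\Z[1/p]/\Z$, which is nonzero for only finitely many $p$ since $x_p\in\Z_p$ for almost all $p$. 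Setting $q_0=\sum_p r_p\in\Q$, the key computation is that $\{q_0\}_\ell=\{x_\ell\}_\ell$ for every prime $\ell$, because $r_p\in\Z_\ell$ for $p\neq\ell$ so only the $p=\ell$ term contributes; hence $x_p-q_0\in\Z_p$ for all $p$. This is exactly the surjectivity of the diagonal map $\Q\to\AQfin/\prod_p\Z_p$, i.e.\ the decomposition $\Q/\Z\cong\bigoplus_p\Z[1/p]/\Z$, and it is the real content of the proof.

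Having cleared the finite places, I would choose the unique $m\in\Z$ with $x_\infty-\alpha(q_0+m)\in[0,|\alpha|)$, which exists since $[0,|\alpha|)$ is a fundamental domain for $\alpha\Z=|\alpha|\Z$ in $\R$; as $m\in\Z\subseteq\Z_p$, subtracting $\varphi_\alpha(q_0+m)$ keeps all $p$-adic parts in $\Z_p$, so $x-\varphi_\alpha(q_0+m)\in B_\alpha$. For uniqueness, if $b-b'=\varphi_\alpha(r)$ with $b,b'\in B_\alpha$ and $r\in\Q$, then each $p$-adic component of $\varphi_\alpha(r)$ lies in $\Z_p-\Z_p=\Z_p$, forcing $r\in\Q\cap\prod_p\Z_p=\Z$, and then $\alpha r\in\alpha\Z\cap(-|\alpha|,|\alpha|)=\{0\}$, so $r=0$. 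With $B_\alpha$ established, discreteness follows because the open set $(-|\alpha|,|\alpha|)\times\prod_p\Z_p$ meets $\varphi_\alpha(\Q)$ only in $0$ (the $p$-adic condition forces $q\in\Z$, and $|\alpha q|<|\alpha|$ then forces $q=0$), while co-compactness follows since the compact set $\overline{B_\alpha}=[0,|\alpha|]\times\prod_p\Z_p$ surjects onto $\AQ/\varphi_\alpha(\Q)$ under the quotient map. The covolume is then $s(\varphi_\alpha(\Q))=\mu(B_\alpha)=|\alpha|\cdot\prod_p\mu_{\Qp}(\Z_p)=|\alpha|$ by the product normalization.

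For the annihilator I would factor $\varphi_\alpha=D_\alpha\circ\varphi_1$, where $\varphi_1(q)=(q,(q)_p)$ is the diagonal embedding and $D_\alpha(x_\infty,(x_p)_p)=(\alpha x_\infty,(x_p)_p)$ is a topological automorphism of $\AQ$. Substituting into \eqref{eq:2006a} shows that under the identification of $\widehat{\A}_{\Q}$ with $\AQ$ the dual automorphism $\widehat{D_\alpha}$ (precomposition by $D_\alpha$) again acts as $D_\alpha$. Since annihilators transform by $(D_\alpha H)^\perp=(\widehat{D_\alpha})^{-1}(H^\perp)$, taking $H=\varphi_1(\Q)$ and invoking the stated fact $\varphi_1(\Q)^\perp=\varphi_1(\Q)$ (equivalently $\Q^\perp=\Q$) yields $\varphi_\alpha(\Q)^\perp=D_{1/\alpha}(\varphi_1(\Q))=\varphi_{1/\alpha}(\Q)$. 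Alternatively one can first verify the inclusion $\varphi_{1/\alpha}(\Q)\subseteq\varphi_\alpha(\Q)^\perp$ directly from the product formula $e^{2\pi i s}=\prod_p e^{2\pi i\{s\}_p}$ for $s\in\Q$, and then upgrade to equality by matching covolumes: $\varphi_{1/\alpha}(\Q)$ is a lattice of covolume $1/|\alpha|$ by the first part of the proposition, and Haar-measure duality gives $s(\varphi_\alpha(\Q)^\perp)=s(\varphi_\alpha(\Q))^{-1}=1/|\alpha|$, so the index $[\varphi_\alpha(\Q)^\perp:\varphi_{1/\alpha}(\Q)]$ is $1$.

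I expect the main obstacle to be the existence half of the fundamental-domain argument, namely producing a single rational that simultaneously clears the fractional parts at all primes; this is precisely the global-to-local decomposition $\Q/\Z\cong\bigoplus_p\Z[1/p]/\Z$, and once it is in place everything downstream reduces to bookkeeping with product measures, open neighbourhoods of the identity, and the behaviour of annihilators under the scaling automorphism $D_\alpha$.
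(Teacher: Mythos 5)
Your proof is correct, but it takes a substantially more self-contained route than the paper's. The paper disposes of the proposition in a few lines: it cites the literature (Ramakrishnan--Valenza, Deitmar, Manin--Panchishkin) for the classical facts that $\varphi_{1}(\Q)$ is a lattice with fundamental domain $[0,1)\times\prod_{p\in\Prime}\Z_p$, covolume $1$, and $\varphi_{1}(\Q)^{\perp}=\varphi_{1}(\Q)$, and then notes that $\varphi_{\alpha}$ is obtained from $\varphi_{1}$ by the automorphism scaling the real coordinate by $\alpha$, under which all three statements transfer at once. You instead prove the lattice claims from scratch and directly for general $\alpha$: your construction of $q_{0}=\sum_{p} r_{p}$ from the $p$-adic fractional parts is a hands-on proof of the decomposition $\Q/\Z\cong\bigoplus_{p}\Z[1/p]/\Z$, which is precisely the content hidden inside the paper's citations, and your uniqueness, discreteness, co-compactness and covolume arguments are all sound. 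For the annihilator you rejoin the paper's route: the factorization $\varphi_{\alpha}=D_{\alpha}\circ\varphi_{1}$, the computation that $\widehat{D_{\alpha}}=D_{\alpha}$ under \eqref{eq:2006a}, and the imported self-duality $\Q^{\perp}=\Q$ (which the paper likewise takes from the literature) constitute the same reduction. Your alternative argument (inclusion $\varphi_{1/\alpha}(\Q)\subseteq\varphi_{\alpha}(\Q)^{\perp}$ via the product formula $e^{2\pi i s}=\prod_{p}e^{2\pi i\{s\}_{p}}$, then equality by comparing covolumes) is also valid, but note that the duality $s(H^{\perp})=s(H)^{-1}$ silently uses that the identification \eqref{eq:2006a} carries the self-dual Haar measure, so that route is not as citation-free as it appears. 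In short, the paper's approach buys brevity; yours buys a proof readable without the adelic literature, making explicit where the arithmetic over the primes actually enters.
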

\begin{proof}
As described in, e.g., \cite[Theorem 5-11]{rava99}, \cite[Satz 5.2.1]{de10-3}, and \cite[Section 4.3.7]{mapa05}, $\varphi_{1}(\Q)$ is a lattice in $\AQ$ with fundamental domain $[0,1) \times \prod_{p \in \Prime} \Zp$ and $s(\varphi_{1}(\Q)) = 1$. Moreover, it is also shown that $\varphi_{1}(\Q)^{\perp} = \varphi_{1}(\Q)$. The embedding of $\Q$ into $\AQ$ via $\varphi_{\alpha}$ for $\alpha\ne 1$ corresponds to an application of the topological group automorphism $x \mapsto \alpha \cdot x$, $x\in \R$ on the real component of $\AQ$. It is immediate that the desired results hold.
\end{proof}

\paragraph{Lattices in $\R\times\Qp$.}

In a similar fashion to the construction given
in equation \eqref{eq:2006a}, every $y=(y_{\infty},y_{p})\in \R\times\Qp$ defines a character $\omega_{y}\in \widehat{\R}\times\widehat{\Q}_{p}$ via
\begin{equation} \label{eq:2006b} \omega_{y} : \R\times\Qp, \ x=\big(x_{\infty},x_{p}\big) \mapsto e^{2\pi i (x_{\infty} y_{\infty}-\{ x_{p} y_{p}\}_{p})}.
\end{equation}
One can show that every character in $\widehat{\R}\times\widehat{\Q}_{p}$ is given as in \eqref{eq:2006b} for some $y\in \R\times\Qp$.

As for the adeles, there is an abundance of lattices in $\R\times\Qp$. This is well-known and can be found in, e.g., \cite{lapa15}.
\begin{proposition}
Let $p$ be a prime number. For any $\alpha \in \R \setminus \{ 0 \}$ the mapping
\[ \psi_{\alpha}: \Z[1/p]\to\R\times\Qp, \ \psi_{\alpha}(q) = (\alpha q, q )\]
embeds $\Z[1/p]$ as a lattice into $\R\times\Qp$. The set
$B_{\alpha} = [0,|\alpha|) \times \Z_p$
is a fundamental domain for $\psi_{\alpha}(\Z[1/p])$ in $\R\times\Qp$ and $s(\psi_{\alpha}(\Z[1/p])=\vert \alpha\vert$. 
Moreover, under the identification of $\R\times\Qp$ with $\widehat{\R}\times\widehat{\Q}_{p}$ as in \eqref{eq:2006b}, the group $\psi_{\alpha}(\Z[1/p])^{\perp}$ can be identified with $\psi_{1/\alpha}(\Z[1/p])$.
\end{proposition}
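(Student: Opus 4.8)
The plan is to mirror the proof of Proposition~\ref{prop:lattice_in_adeles} and reduce everything to the case $\alpha = 1$ by means of the topological group automorphism $\phi_\alpha\colon \R\times\Qp \to \R\times\Qp$, $\phi_\alpha(x_\infty,x_p)=(\alpha x_\infty, x_p)$ (an automorphism, since multiplication by $\alpha\neq 0$ is one on $\R$). It satisfies $\phi_\alpha\circ\psi_1 = \psi_\alpha$ on $\Z[1/p]$ and carries $[0,1)\times\Zp$ onto a fundamental domain of covolume $|\alpha|$, so it transports the lattice and covolume statements for $\alpha=1$ to arbitrary $\alpha$. The two algebraic facts driving the $\alpha=1$ case are recorded earlier in this section: $\Qp = \Zp + \Z[1/p]$ and $\Zp \cap \Z[1/p] = \Z$.

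For $\alpha=1$ I would first check that $B_1=[0,1)\times\Zp$ is a fundamental domain. Given $(x_\infty,x_p)$, the decomposition $\Qp=\Zp+\Z[1/p]$ supplies some $q_0\in\Z[1/p]$ with $x_p-q_0\in\Zp$, unique modulo $\Z$ because $\Zp\cap\Z[1/p]=\Z$; among the representatives $q_0+\Z$ exactly one $q$ satisfies $x_\infty-q\in[0,1)$, yielding the unique splitting $(x_\infty,x_p)=(x_\infty-q,\,x_p-q)+\psi_1(q)$ with first summand in $B_1$. Discreteness holds since $\psi_1(q)=(q,q)\in(-1,1)\times\Zp$ forces $q\in\Z[1/p]\cap\Zp=\Z$ with $|q|<1$, hence $q=0$; co-compactness follows as the quotient is the continuous image of the compact set $[0,1]\times\Zp$, and the covolume is the product of $\mu_{\Qp}(\Zp)=1$ with the length of $[0,1)$, namely $1$. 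Applying $\phi_\alpha$, and noting $\alpha\Z=|\alpha|\Z$ so that the rescaled real coset meets $[0,|\alpha|)$ exactly once, delivers the fundamental domain $B_\alpha$ and $s(\psi_\alpha(\Z[1/p]))=|\alpha|$.

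For the annihilator I would compute directly with the pairing \eqref{eq:2006b}: for $y=\psi_{1/\alpha}(r)=(r/\alpha,r)$ with $r\in\Z[1/p]$ and $q\in\Z[1/p]$,
\[ \omega_y(\psi_\alpha(q)) = e^{2\pi i(\alpha q\,(r/\alpha)-\{qr\}_p)} = e^{2\pi i(qr-\{qr\}_p)} = 1, \]
since $qr\in\Z[1/p]$ and $qr-\{qr\}_p\in\Z$ by definition of $\{\,\cdot\,\}_p$; thus $\psi_{1/\alpha}(\Z[1/p])\subseteq\psi_\alpha(\Z[1/p])^{\perp}$. The crux, which I expect to be the main obstacle, is the reverse inclusion: the one-line pairing computation gives containment, but closing the gap needs genuine input. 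I would close it using self-duality of $\R\times\Qp$ under \eqref{eq:2006b}, whereby the annihilator of a lattice is again a lattice with $s(L)\,s(L^{\perp})=1$ in the self-dual normalization; then $\psi_\alpha(\Z[1/p])^{\perp}$ has covolume $1/|\alpha|$, matching that of the sublattice $\psi_{1/\alpha}(\Z[1/p])$, so that the index between them is $1$ and the two coincide. Equivalently, one may bypass the covolume identity by transferring the self-dual relation $\psi_1(\Z[1/p])^{\perp}=\psi_1(\Z[1/p])$ (the analogue of $\Q^{\perp}=\Q$, cf.\ \cite{lapa15}) through the self-adjoint dual automorphism $\phi_\alpha^{\ast}=\phi_\alpha$, which gives $\psi_\alpha(\Z[1/p])^{\perp}=\phi_{1/\alpha}(\psi_1(\Z[1/p]))=\psi_{1/\alpha}(\Z[1/p])$ at once.
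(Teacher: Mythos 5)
Your proposal is correct, and its scaffold---reducing to $\alpha=1$ via the automorphism $\phi_\alpha(x_\infty,x_p)=(\alpha x_\infty,x_p)$, which intertwines $\psi_1$ with $\psi_\alpha$ and is self-adjoint for the pairing \eqref{eq:2006b}---is precisely the route the paper indicates: it omits the proof, pointing to that of Proposition \ref{prop:lattice_in_adeles}, whose entire content is this same scaling reduction plus citations to the literature for the case $\alpha=1$. Where you genuinely diverge is that the paper outsources the $\alpha=1$ case (to \cite{lapa15}, and to \cite{rava99}, \cite{de10-3}, \cite{mapa05} for the adelic analogue), whereas you verify it from first principles: the fundamental-domain, uniqueness, and discreteness arguments all flow from the two algebraic facts $\Qp=\Zp+\Z[1/p]$ and $\Zp\cap\Z[1/p]=\Z$ recorded in Section \ref{sec:padic}, and the annihilator statement from the direct character computation giving $\psi_{1/\alpha}(\Z[1/p])\subseteq\psi_\alpha(\Z[1/p])^{\perp}$, with the reverse inclusion closed either by the covolume identity $s(L)\,s(L^{\perp})=1$ or by transporting $\psi_1(\Z[1/p])^{\perp}=\psi_1(\Z[1/p])$ through $\phi_\alpha^{\ast}=\phi_\alpha$. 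Both closing devices are sound, but note that the covolume identity presupposes that the Haar measure fixed in the paper (Lebesgue on $\R$ times the measure with $\mu_{\Qp}(\Zp)=1$) is self-dual under \eqref{eq:2006b}---true here, since $\widehat{\mathds{1}_{\Zp}}=\mathds{1}_{\Zp}$, but itself a fact of the same standing as the self-duality relation you are trying not to cite; so your second, automorphism-based closure is the cleaner one and is implicitly what the paper has in mind. What your version buys is a self-contained argument that makes visible exactly where each structural fact about $\Z[1/p]$, $\Zp$, and $\Qp$ enters (including the correct handling of $\alpha<0$ via $\alpha\Z=|\alpha|\Z$); what the paper's version buys is brevity.
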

The proof is very similar in nature to that of Proposition \ref{prop:lattice_in_adeles} and it is therefore omitted.

\section{Feichtinger's algebra for $p$-adic groups and the adeles}
\label{sec:SO}

For any locally compact abelian group the \emph{Feichtinger algebra}  $\SO$ \cite{fe81-2,ja19,lo80} (sometimes denoted by $\mathbf{M}^{1}$) is a Banach space of functions that behaves very much like the Schwartz-Bruhat space $\mathscr{S}$ (in fact, $\mathscr{S}$ is a dense subspace of $\SO$, see \cite[Theorem 9]{fe81-2}).  For example, $\SO$ is invariant under the Fourier transform and the Poisson formula holds pointwise. 
In this section we describe $\SO$ for the following concrete groups: the real line $\R$, the $p$-adic integers $\Zp$, the $p$-adic numbers $\Qp$, the group $\R\times\Qp$, the finite adeles $\AQfin$, and the adeles $\AQ$.

The description below of $\SO$ on $\Qp$ and $\AQ$ makes it clear that $\SO$ is a far larger and interesting space of functions than the Schwartz-Bruhat space on these groups. The latter consists ``only'' of the collection of all locally constant functions.

\paragraph{$\SO$ on the real line, $\R$.} The Feichtinger algebra on the real line, $\SO(\R)$, is described in detail in, e.g., \cite{fezi98} and \cite{gr01}. Here we only mention the following definition. If we let $g$ be the Gaussian function $g(x) = e^{-x^{2}}$, $x\in \R$, then 
\[ \SO(\R) = \Big\{ f\in L^{1}(\R) \, : \, \int_{\R} \Vert f * E_{\omega}g \Vert_{1} \, d\omega < \infty \Big\}.\]
Here $*$ is the usual convolution of functions and $E_{\omega} : f(x) \mapsto e^{2\pi i \omega x} f(x)$, $x\in\R$ is the modulation operator. The $\SO$-norm of $f$ is given by $\Vert f \Vert_{\SO(\R)} = \int_{\R} \Vert f * E_{\omega}g \Vert_{1} \, d\omega$. 
\paragraph{$\SO$ on the $p$-adic integers, $\Zp$.} Since $\Zp$ is a compact group, it follows from, e.g., \cite[Remark~3]{fe81-2} or \cite[Lemma 4.11]{ja19}, that $\SO(\Zp)$ consists exactly of all continuous functions on $\Zp$ that have an absolutely summable Fourier series. With the identification of the Pr\"ufer $p$-group $\Z(p^{\infty})$ as the dual group of $\Z_{p}$ (see \eqref{eq:0407a} and \eqref{eq:0907a}) we can describe $\SO(\Zp)$ as follows:
\[ \SO(\Zp) = \Big\{ f \in \textbf{C}(\Zp) \, : \, f(x) = \!\sum_{z \in \Z(p^{\infty})} \!\!\! c(z) \, e^{2 \pi i \{xz\}_{p}}, \ x\in \Zp  \ \ \text{and where} \ \ c\in \ell^{1}(\Z(p^{\infty}))\Big\}. \]
Moreover, the norm on $\SO(\Zp)$ is defined by $\Vert f \Vert_{\SO(\Zp)} = \Vert c \Vert_{1}$, where $f$ and $c$ are related as above.

\paragraph{$\SO$ on the $p$-adic numbers, $\Qp$.} The $p$-adic numbers $\Qp$ contain the $p$-adic integers $\Z_{p}$ as a compact open subgroup. A set of coset representatives $Y_{p}$ of $\Qp/\Zp$ is given by 
\begin{equation} \label{eq:coset-Qp-mod-Zp} Y_{p} = \{ y \in \Qp \, : \, y = \sum_{j=1}^{\infty} c_{j} p^{-j}, \ c_{j}\in\{0,1,p-1\}, \ c_{j} = 0 \text{ for all but finitely many $j$} \}.\end{equation}

It follows by, e.g., \cite[Lemma 8(ii)]{fe81-2}, \cite[Theorem 7.7]{ja19}, or \cite[\S 2.9]{re89} that $\SO(\Qp)$ consists exactly of all continuous functions $f$ for which their restrictions to each of the cosets of $\Qp/\Zp$, the collection of functions
\[ \big\{ f_{y}:\Zp\to\C, \ x\mapsto f(x+y), \ x\in\Zp \big\}_{y\in Y_{p}}, \]
belongs to $\SO(\Zp)$ and such that $\Vert f \Vert_{\SO(\Qp)} := \sum_{y\in Y_p} \Vert f_{y} \Vert_{\SO(\Zp)} < \infty$.

The characterization of $\SO(\Zp)$ in terms of the functions with absolutely convergent Fourier series allows us to describe $\SO(\Qp)$ as those functions that are exactly of the form
\begin{equation} \label{eq:1806a}  f(x) = \sum_{z\in \Z(p^{\infty})} c(y,z) \, e^{2\pi i \{(x-y)z\}_{p}} \ \ \text{for all} \ x\in y+\Zp \end{equation}
and for all $y\in Y_{p}$, and where $c\in\ell^{1}(Y_{p}\times\Z(p^{\infty}))$. Moreover, the norm on $\SO(\Qp)$ is equivalently defined by $\Vert f \Vert_{\SO(\Qp)} = \Vert c \Vert_{\ell^{1}(Y_{p}\times \Z(p^{\infty}))}$, where $f$ and $c$ are related as in \eqref{eq:1806a}.

\paragraph{$\SO$ on $\R\times\Qp$.} It follows from, e.g., \cite[Theorem 7]{fe81-2} or \cite[Theorem 7.4]{ja19}, that the functions in $\SO(\R\times\Qp)$ are exactly those of the form
\begin{equation} \label{eq:1806b} f = \sum_{j\in\N} f^{(\R)}_{j} \otimes f^{(\Qp)}_{j} \ \ \text{where} \ \ f^{(\R)}_{j}\in\SO(\R),\, f^{(\Qp)}_{j}\in\SO(\Qp)\end{equation}
for all $j\in\N$ and such that $\sum_{j\in\N} \Vert f^{(\R)}_{j} \Vert_{\SO(\R)} \, \Vert  f^{(\Qp)}_{j} \Vert_{\SO(\Qp)} <\infty$.
The norm on $\SO(\R\times\Qp)$ is given by
\[ \Vert f \Vert_{\SO(\R\times\Qp)} = \inf \big\{ \sum_{j\in\N} \Vert f^{(\R)}_{j} \Vert_{\SO(\R)} \, \Vert  f^{(\Qp)}_{j} \Vert_{\SO(\Qp)} \big\},\]
where the functions $f$, $\{f_{j}^{(\R)}\}_{j\in\N}$ and $\{ f_{j}^{(\Qp)}\}_{j\in\N}$ are related as in \eqref{eq:1806b} and the infimum is taken over all possible representations of $f$ as in \eqref{eq:1806b}.

\paragraph{$\SO$ on $\AQfin$.} The finite adeles $\AQfin$ contain $H=\Pi_{p\in\Prime} \Zp$ as a compact open subgroup. A set of coset representatives $Y$ of $\AQfin/H$ is given by
\begin{align*} Y = \big\{ (y_{2},y_{3},\ldots) \in \Pi_{p\in\Prime} Y_{p} \, : \ & \text{where $Y_{p}$ is as in \eqref{eq:coset-Qp-mod-Zp} and}  \\
& \qquad \text{$y_{p} = 0$ for all but finitely many $p$}\big\}. \end{align*}
By \cite[Lemma 8(ii)]{fe81-2}, \cite[Theorem 7.7]{ja19}, or \cite[\S 2.9]{re89} the Banach space $\SO(\AQfin)$ consists exactly of all continuous functions $f$ on $\AQfin$ for which their restriction to each of the cosets of $\AQfin/H$, the functions
\[  \big\{ f_{y} : H \mapsto \C, \ x \mapsto f(x+y), \ x\in H\big\}_{y\in Y},\]
belong to $\SO(H)$ and such that 
\begin{equation} \label{eq:1806cc} \Vert f \Vert_{\SO(\AQfin)} = \sum_{y\in Y} \Vert f_{y} \Vert_{\SO(H)} < \infty.\end{equation}
Here $\SO(H)$ is the Banach space of continuous functions over the compact group $H=\Pi_{p\in\Prime} \Zp$ with absolutely convergent Fourier series. 

Another characterization of $\SO(\AQfin)$ is given in \cite[\S 9.4]{re89} in the following way: any function $f$ in $\SO(\AQfin)$ is exactly of the form
\begin{equation} \label{eq:1806c} f = \sum_{j\in\N} \bigotimes_{p\in\Prime} f^{(\Qp)}_{j} = \sum_{j\in\N} f^{(\Q_{2})}_{j} \otimes f^{(\Q_{3})}_{j} \otimes \ldots, \ \text{where} \ f_{j}^{(\Qp)} \in \SO(\Qp), \ p\in\Prime, j\in\N,\end{equation}
and for each $j\in\N$ \emph{only finitely many} of the functions $f^{(\Q_{p})}_{j}$, $p\in\Prime$ are \emph{not equal} to $\mathds{1}_{\Zp}$, and such that
\[ \sum_{j\in\N} \prod_{p\in\Prime} \Vert f_{j}^{(\Qp)} \Vert_{\SO(\Qp)} < \infty.\]
The norm 
\[ \Vert f \Vert_{\SO(\AQfin)} = \inf\Big\{ \sum_{j\in\N} \prod_{p\in\Prime} \Vert f_{j}^{(\Qp)} \Vert_{\SO(\Qp)} \Big\},\]
where the infimum is taken over all possible representation of $f$ as in \eqref{eq:1806c}, is a norm on $\SO(\AQfin)$ that is equivalent to the norm in \eqref{eq:1806cc}.

\paragraph{$\SO$ on the adele group, $\AQ$.} By definition $\AQ=\R \times \AQfin$. 
It follows from \cite[Theorem 7]{fe81-2} or \cite[Theorem 7.4]{ja19} that a function $f$ belongs to $\SO(\AQ)$ if and only if
\begin{equation} \label{eq:1806f} f = \sum_{j\in\N} \bigotimes_{p\in\{\infty\}\cup\Prime} f^{(\Qp)}_{j} = \sum_{j\in\N} f_{j}^{(\R)} \otimes f^{(\Q_{2})}_{j} \otimes f^{(\Q_{3})}_{j} \otimes \ldots, \end{equation}
($\Q_{\infty} = \R$) where for each $j\in\N$ only finitely many of the functions $f^{(\Q_{p})}_{j}$, $p\in\Prime$ are not equal to $\mathds{1}_{\Zp}$, and such that
\[ \sum_{j\in\N} \prod_{p\in\{\infty\}\cup\Prime} \Vert f_{j}^{(\Qp)} \Vert_{\SO(\Qp)} < \infty.\]
Moreover, the $\SO(\AQ)$-norm is given by
\[ \Vert f \Vert_{\SO(\AQ)} = \inf \Big\{ \sum_{j\in\N} \prod_{p\in\{\infty\}\cup\Prime} \Vert f_{j}^{(\Qp)} \Vert_{\SO(\Qp)}\Big\},\]
where the infimum is taken over all possible representations of $f$ as in \eqref{eq:1806f}.

\section{Gabor frames}
\label{sec:gabor}
In this section we describe how one can construct Gabor frames for $L^{2}(\R\times\Qp)$ and $L^{2}(\AQ)$ from existing Gabor frames for $L^{2}(\R)$.
The theory of Gabor frames is well understood, see e.g., the books \cite{ch16,gr01} and the recent paper \cite{jale16-2} that develops the theory of Gabor frames for general LCA groups. We give a very brief account of Gabor frames for general LCA groups before we state our main result, Theorem \ref{th:gab-frame-for-adele}.

\paragraph{The theory of Gabor frames for general LCA groups.}
For a moment let $G$ be a general locally compact abelian group. We denote the dual group by $\ghat$. The action that an element $\omega\in \ghat$ has on $x\in G$ is written as $\omega(x)$. For any $x\in G$ and $\omega\in \ghat$ we define the \emph{translation} $T_{x}$ and the \emph{modulation} operator $E_{\omega}$ as follows:
\[ T_{x}f(t) = f(t-x), \ E_{\omega}f(t) = \omega(t) f(t), \ t\in G.\]
The translation and modulation operators are unitary operators on $L^{2}(G)$ and isometries on $\SO(G)$. For convenience we define the \emph{time-frequency} shift operator for any $\la = (x,\omega) \in G\times\ghat$ to be
\[ \pi(\la)  = \pi(x,\omega) = E_{\omega} T_{x}.\]

Let $\Lambda$ be a lattice (a discrete and co-compact subgroup) of the time-frequency domain $G\times\ghat$ and let $g$ be a function in $L^{2}(G)$. We let $\langle \, \cdot \, , \, \cdot \, \rangle$ denote the $L^{2}$-inner product with the linearity in the first entry. The collection of functions $\{\pi(\lambda)g \}_{\lambda\in\Lambda} \subset L^{2}(G)$ is a \emph{Gabor system}. Such a system is a \emph{frame} for $L^{2}(G)$ if there exist constants $A,B>0$ such that
\[ A\, \Vert f\Vert_{2}^{2} \le \sum_{\lambda\in\Lambda} \vert \langle f, \pi(\lambda) g \rangle\vert^{2} \le B \, \Vert f \Vert_{2}^{2} \ \ \text{for all} \ \ f\in L^{2}(G).\]
Equivalently, the associated Gabor frame operator 
\[ S_{g,\La} : L^{2}(G)\to L^{2}(G), \ S_{g,\La} f = \sum_{\la\in\La} \langle f, \pi(\la) g\rangle \pi(\la) g\]
is well-defined, linear, bounded and invertible. 
The usefulness of (Gabor) frames lies in the following.
 If $g\in L^{2}(G)$ and the lattice $\Lambda$ in $G\times\ghat$ are such that $\{\pi(\lambda)g\}_{\lambda\in\Lambda}$ is a frame for $L^{2}(G)$, then there exists a (in general non-unique) function $h\in L^{2}(G)$ such that
\begin{equation}\label{eq:frame-rep} f = \sum_{\lambda\in\Lambda} \langle f, \pi(\lambda) g \rangle \, \pi(\lambda) h \ \ \text{for all} \ \ f\in L^{2}(G).\end{equation}
If $g$ and $h$ are such that \eqref{eq:frame-rep} holds, then they are called a dual pair of Gabor frame generators.
For a given Gabor frame $\{\pi(\la) g\}_{\la\in\La}$ the canonical choice of the function $h$ such that \eqref{eq:frame-rep} holds is the \emph{canonical dual generator} $h = S_{g,\Lambda}^{-1} g$. It is a celebrated result of Gabor analysis that if $g\in \SO(G)$ generates a Gabor frame, then the canonical dual generator also belongs to $\SO(G)$ \cite{grle04}.

For our purposes we mention only the following result of Gabor analysis:
\begin{lemma}[{\cite[Theorem 6.1]{jale16-2}}] \label{le:wex-raz} Let $G$ be an LCA group and let $\Lambda$ be a lattice of $G\times\ghat$. Two functions $g,h\in \SO(G)$ are a dual pair of Gabor frame generators for $L^{2}(G)$ with respect to time-frequency shifts from $\Lambda$ (i.e., \eqref{eq:frame-rep} holds) if and only if
\[ \langle h, \pi(\lambda^{\circ}) g \rangle = \s(\Lambda) \, \delta_{\lambda^{\circ},0} \ \ \text{for all} \ \lambda^{\circ}\in \Lambda^{\circ},\]
where $\Lambda^{\circ}$ is the \emph{adjoint lattice} of $\Lambda$,
\[ \Lambda^{\circ} = \{ \lambda^{\circ} \in G\times\ghat \, : \, \pi(\lambda) \, \pi(\lambda^{\circ}) = \pi(\lambda^{\circ}) \, \pi(\lambda) \ \ \text{for all} \ \ \lambda\in \Lambda\}. \]
\end{lemma}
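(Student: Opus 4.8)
The plan is to reduce the stated biorthogonality to an operator identity and then to invoke the Janssen representation of the mixed frame operator together with the linear independence of time-frequency shifts. First I would observe that the frame reconstruction formula \eqref{eq:frame-rep} is, by definition, nothing but the assertion that the mixed frame operator
\[ S_{g,h,\La} f = \sum_{\la\in\La} \langle f, \pi(\la) g\rangle \, \pi(\la) h \]
equals the identity on $L^{2}(G)$. Since $g,h\in\SO(G)$, this operator is well defined and bounded, and it suffices to verify $S_{g,h,\La}=\mathrm{Id}$ on the dense subspace $\SO(G)$. Thus the whole statement becomes: $S_{g,h,\La}=\mathrm{Id}$ if and only if $\langle h, \pi(\lac) g\rangle = \s(\La)\,\delta_{\lac,0}$ for all $\lac\in\Lac$.

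The heart of the argument is the \emph{Janssen representation}
\[ S_{g,h,\La} = \frac{1}{\s(\La)} \sum_{\lac\in\Lac} \langle h, \pi(\lac) g\rangle \, \pi(\lac), \]
where the sum converges absolutely in operator norm. To establish this I would start from $\langle S_{g,h,\La} f_{1}, f_{2}\rangle = \sum_{\la\in\La} \langle f_{1}, \pi(\la)g\rangle\,\langle \pi(\la) h, f_{2}\rangle$ for $f_{1},f_{2}\in\SO(G)$, recognize the summand as the restriction to the lattice $\La\subset G\times\ghat$ of a continuous function on the time-frequency plane, and apply the Poisson summation formula for $\La$. The membership $g,h\in\SO(G)$ is exactly what guarantees that the relevant function lies in $\SO(G\times\ghat)$, so that Poisson summation holds pointwise (as recorded in Section \ref{sec:SO}) and that the coefficients $\langle h, \pi(\lac) g\rangle$ are absolutely summable over any lattice. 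Poisson summation converts the sum over $\La$ into $\s(\La)^{-1}$ times a sum over the annihilator $\La^{\perp}$ of $\La$ inside the dual group of $G\times\ghat$; the commutation relations of the time-frequency shifts identify this annihilator precisely with the adjoint lattice $\Lac$, and a bookkeeping of the Fourier-side quantities turns the summand into $\langle h, \pi(\lac) g\rangle\,\langle \pi(\lac) f_{1}, f_{2}\rangle$. Passing from this sesquilinear identity to the operator identity yields the Janssen representation.

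With the Janssen representation in hand, the conclusion is immediate from the linear independence of the time-frequency shifts $\{\pi(\lac)\}_{\lac\in\Lac}$. Writing $\mathrm{Id}=\pi(0)$ and subtracting, the equation $S_{g,h,\La}=\mathrm{Id}$ becomes $\sum_{\lac\in\Lac} c(\lac)\,\pi(\lac)=0$ with $\ell^{1}$ coefficients $c(\lac)=\s(\La)^{-1}\langle h, \pi(\lac) g\rangle - \delta_{\lac,0}$. Because the spreading (symbol) map sending such an operator to the discrete measure $\sum_{\lac} c(\lac)\,\delta_{\lac}$ is injective, and the point masses at the distinct lattice points of $\Lac$ are linearly independent, every $c(\lac)$ must vanish. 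This is equivalent to $\langle h, \pi(\lac) g\rangle = \s(\La)\,\delta_{\lac,0}$, proving both implications at once. The main obstacle I anticipate is the rigorous justification of the Janssen representation in the general LCA setting: verifying the hypotheses for Poisson summation on $G\times\ghat$, controlling convergence via the $\SO$-norm estimates, and carefully matching the annihilator lattice of $\La$ with the adjoint lattice $\Lac$ through the symplectic commutation relations. The remaining steps are formal once this identity is secured.
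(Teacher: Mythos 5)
The paper never proves this lemma: it is imported verbatim from \cite[Theorem 6.1]{jale16-2}, so there is no internal argument to compare against, and the relevant benchmark is the proof in that cited reference. Your proposal is correct in outline and follows essentially that standard route: reduce duality of $g,h$ to the operator identity $S_{g,h,\La}=\mathrm{Id}$ for the mixed frame operator, establish the Janssen representation by applying Poisson summation over $\La$ to $\la\mapsto\langle f_{1},\pi(\la)g\rangle\langle \pi(\la)h,f_{2}\rangle$, identify the annihilator of $\La$ with $\Lac$ under the symplectic self-duality of $G\times\ghat$, and match coefficients. Two of your ingredients are genuine theorems rather than bookkeeping, and a complete write-up must cite or prove them: (i) that short-time Fourier transforms of $\SO(G)$-functions lie in $\SO(G\times\ghat)$ and that $\SO$ is closed under pointwise multiplication --- this is exactly what makes your summand an $\SO(G\times\ghat)$-function, so that Poisson summation holds pointwise with absolute convergence and $\lac\mapsto\langle h,\pi(\lac)g\rangle$ lies in $\ell^{1}(\Lac)$; and (ii) the injectivity of $c\mapsto\sum_{\lac\in\Lac}c(\lac)\,\pi(\lac)$ on $\ell^{1}(\Lac)$, i.e.\ linear independence of time-frequency shifts under absolutely convergent superpositions. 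On point (ii) your phrasing slightly understates the issue: linear independence of the point masses $\delta_{\lac}$ as measures is trivial, and the entire content is the injectivity of the spreading representation itself, which is a separate nontrivial lemma in the literature (proved, e.g., by averaging matrix coefficients of the operator against time-frequency shifts). With those two inputs made explicit, your argument is complete and, as you note, yields both implications simultaneously, exactly as in the cited source.
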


\paragraph{Gabor systems in $L^{2}(\R)$.} Recall that for every $\omega \in \R$ the modulation operator $E_{\omega}$ is given by $E_{\omega}f(t) = e^{2\pi i \omega t} f(t)$. A Gabor system in $L^{2}(\R)$ generated by a function $g\in L^{2}(\R)$ with time-frequency shifts from the lattice $\La = \alpha\Z\times\beta \Z$, $\alpha,\beta>0$ is thus of the form 
\[ \{ \pi(\lambda) g\}_{\la\in\alpha\Z\times\beta\Z} = \{E_{m\beta}T_{n\alpha}g\}_{m,n\in\Z} = \{ t \mapsto e^{2\pi i m\beta t} g(t-n\alpha)  \}_{m,n\in\Z}.\]

Celebrated results in time-frequency analysis include the following: (A) the Gaussian function $g(x) = e^{-\pi x^{2}}$ \cite{ly92,se92-1} and all totally positive functions \cite{grst13,grrost18} generate a Gabor frame for $L^{2}(\R)$ whenever $\alpha$ and $\beta$ are such that $\alpha \beta<1$.
(B) the values of $\alpha,\beta$ and $\gamma$ such that the Gabor system $\{E_{m\beta}T_{n\alpha} \mathds{1}_{[0,\gamma]}\}_{m,n\in\Z}$ is a frame for $L^{2}(\R)$ is much more difficult to describe \cite{ja03,dasu16}.

\paragraph{Gabor systems in $L^{2}(\R\times\Qp)$.}

For every $\omega = (\omega_{\infty},\omega_{p})\in \R\times\Qp$ the modulation operator $E_{\omega} \equiv E_{\omega_{\infty},\omega_{p}}$ on functions $f$ over $\R\times\Qp$ is defined by
\[ E_{\omega} f(t_{\infty},t_{p}) \equiv E_{\omega_{\infty},\omega_{p}} f(t_{\infty},t_{p}) = e^{2\pi i (\omega_{\infty} t_{\infty}-\{ \omega_{p} t_{p}\}_{p})}f(t_{\infty},t_{p}), \ (t_{\infty},t_{p}) \in \R\times\Qp.\]
A Gabor system generated by a function $g\in L^{2}(\R\times\Qp)$ and the lattice 
\[ \La = \psi_{\alpha}(\Z[1/p])\times \psi_{\beta}(\Z[1/p]) = \left\{  (\alpha q, q, \beta r, r) : q,r\in \Z[1/p] \right\}, \ \alpha,\beta>0\] 
is thus of the form
\[ \{ \pi(\lambda) g\}_{\la\in\La} = \big\{ (t_{\infty},t_{p}) \mapsto e^{2\pi i (\beta r t_{\infty}-\{ r t_{p}\}_{p})} g(t_{\infty}-\alpha q,t_{p}-q) \big\}_{q,r\in\Z[1/p]}. \]

\paragraph{Gabor systems in $L^{2}(\AQ)$.}

For every $\omega = (\omega_{\infty},(\omega_{p})_{p})\in \AQ$ the modulation operator $E_{\omega}\equiv E_{\omega_{\infty},(\omega_{p})_{p}}$ on functions $f$ over $\AQ$ is defined by
\[ E_{\omega_{\infty},(\omega_{p})_{p}} f\big(t_{\infty},(t_{p})_{p}\big) = e^{2\pi i \omega_{\infty}t_{\infty}} \prod_{p\in\Prime} e^{-2\pi i \{ \omega_{p} t_{p}\}_{p}}f\big(t_{\infty},(t_{p})_{p}\big), \ (t_{\infty},(t_{p})_{p}) \in \AQ.\]
A Gabor system generated by a function $g\in L^{2}(\AQ)$ and a lattice 
\[ \La = \varphi_{\alpha}(\Q)\times \varphi_{\beta}(\Q) = \big\{ (\alpha q, (q)_{p}, \beta r, (r)_{p}) : q,r\in \Q \big\}, \ \alpha,\beta>0\]
is thus of the form
\[ \{ \pi(\lambda) g\}_{\la\in\La} = \Big\{ \big(t_{\infty},(t_{p})_{p}\big) \mapsto e^{2\pi i \beta r t_{\infty}} \prod_{p\in\Prime} e^{-2\pi i \{ r t_{p}\}_{p}} g(t_{\infty}-\alpha q,(t_{p}-q)_{p}) \Big\}_{q,r\in\Q}. \]

\paragraph{Gabor frames in $L^{2}(\R\times\Qp)$ and $L^{2}(\AQ)$.}
The following result describes that the construction of a Gabor frame in $L^{2}(\R)$ implies that certain functions generate Gabor frames for $L^{2}(\R\times\Qp)$ and $L^{2}(\AQ)$.

\begin{theorem} \label{th:gab-frame-for-adele} Let $\alpha,\beta>0$. For any two functions $g^{(\R)}$ and $h^{(\R)}$ in $\SO(\R)$
the following statements are equivalent. 
\begin{enumerate}[(i)]
\item $g^{(\R)}$ and $h^{(\R)}$ generate dual Gabor frames for $L^{2}(\R)$ with respect to time-frequency shifts from the lattice $\alpha\Z\times\beta\Z$.
\item For any $p\in \Prime$ the two functions $g=g^{(\R)}\otimes \mathds{1}_{\Zp}$ and $h=h^{(\R)}\otimes \mathds{1}_{\Zp}$ in $\SO(\R\times\Qp)$ generate dual Gabor frames for $L^{2}(\R\times\Qp)$ with respect to the lattice 
\[ \La = \psi_{\alpha}(\Z[1/p])\times \psi_{\beta}(\Z[1/p])= \big\{ (\alpha q, q, \beta r, r) : q,r\in \Z[1/p] \big\} \subset \R\times\Qp\times\R\times\Qp.\]
\item The two functions $g$ and $h$ in $\SO(\AQ)$, defined by
\[g = g^{(\R)}\otimes \mathds{1}_{\Z_{2}}\otimes \mathds{1}_{\Z_{3}} \otimes \ldots \ \  \text{and} \ \  h=h^{(\R)}\otimes \mathds{1}_{\Z_{2}}\otimes\mathds{1}_{\Z_{3}}\otimes\ldots,\] 
generate dual Gabor frames for $L^{2}(\AQ)$ with respect to the lattice 
\[ \Lambda = \varphi_{\alpha}(\Q)\times \varphi_{\beta}(\Q) = \left\{ \big(\alpha q, (q)_{p}, \beta r, (r)_{p}\big) : q,r\in \Q \right\} \subset \R\times\AQfin\times\R\times\AQfin.\]
\end{enumerate}
\end{theorem}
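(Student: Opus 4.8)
The plan is to reduce all three statements to the Wexler--Raz biorthogonality relations of Lemma \ref{le:wex-raz} and then check that these relations coincide. Since $\mathds{1}_{\Zp}\in\SO(\Qp)$, the tensor-product descriptions of $\SO(\R\times\Qp)$ and $\SO(\AQ)$ from Section \ref{sec:SO} guarantee that the functions $g,h$ in (ii) and (iii) lie in the respective Feichtinger algebras, so Lemma \ref{le:wex-raz} applies in each case. The first task is therefore to identify the three adjoint lattices and the covolumes $\s(\La)$. From the commutation relation $E_\omega T_x=\omega(x)\,T_x E_\omega$ one finds that $(x^\circ,\omega^\circ)\in\Lac$ precisely when $\omega^\circ(x)=\omega(x^\circ)$ for all $(x,\omega)\in\La$; for a product lattice $\La=\La_1\times\La_2$ (positions times frequencies) this splits as $\Lac=\La_2^\perp\times\La_1^\perp$. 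Feeding in the annihilator computations from Proposition \ref{prop:lattice_in_adeles} and its $\R\times\Qp$ analogue gives $\Lac=\tfrac1\beta\Z\times\tfrac1\alpha\Z$ in case (i), $\Lac=\psi_{1/\beta}(\Z[1/p])\times\psi_{1/\alpha}(\Z[1/p])$ in case (ii), and $\Lac=\varphi_{1/\beta}(\Q)\times\varphi_{1/\alpha}(\Q)$ in case (iii); in all three cases the covolume factorizes as $\s(\La)=|\alpha|\,|\beta|=\alpha\beta$.

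The heart of the argument is the evaluation of the inner products $\langle h,\pi(\lac)g\rangle$ for $\lac\in\Lac$ in cases (ii) and (iii). Because $g$ and $h$ are elementary tensors and $\pi(\lac)$ acts coordinatewise, each such inner product factors as a product of a real factor and one $p$-adic factor per prime. In case (ii) I would write $\lac=(\psi_{1/\beta}(s),\psi_{1/\alpha}(r))$ with $s,r\in\Z[1/p]$, so that the real factor is exactly $\langle h^{(\R)},E_{r/\alpha}T_{s/\beta}g^{(\R)}\rangle_{L^2(\R)}$, while the single $p$-adic factor is
\[ \int_{\Zp}\mathds{1}_{\Zp}(t_p-s)\,e^{2\pi i\{r t_p\}_p}\,dt_p. \]
This factor equals $1$ when both $s\in\Zp$ and $r\in\Zp$, and vanishes otherwise: the indicator forces $t_p\in\Zp\cap(s+\Zp)$, which is empty unless $s\in\Zp$, and the remaining integral of the character $t_p\mapsto e^{2\pi i\{rt_p\}_p}$ over the compact group $\Zp$ is $1$ if $r\in\Zp=\Zp^\perp$ and $0$ if $r\notin\Zp$. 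For $s,r\in\Z[1/p]$ the conditions $s\in\Zp$ and $r\in\Zp$ are each equivalent to membership in $\Z$.

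The conclusion is then bookkeeping. In case (ii) the relation $\langle h,\pi(\lac)g\rangle=\alpha\beta\,\delta_{\lac,0}$ over $\lac\in\Lac$ is vacuous (both sides zero) when $s$ or $r$ lies in $\Z[1/p]\setminus\Z$, and for $s,r\in\Z$ it reads $\langle h^{(\R)},E_{r/\alpha}T_{s/\beta}g^{(\R)}\rangle=\alpha\beta\,\delta_{s,0}\delta_{r,0}$, which is precisely the Wexler--Raz relation for (i) over $\tfrac1\beta\Z\times\tfrac1\alpha\Z$; this yields (i)$\iff$(ii). Case (iii) is identical in spirit, with one $p$-adic factor per prime: writing $\lac=(\varphi_{1/\beta}(s),\varphi_{1/\alpha}(r))$ with $s,r\in\Q$ one obtains
\[ \langle h,\pi(\lac)g\rangle_{L^2(\AQ)}=\langle h^{(\R)},E_{r/\alpha}T_{s/\beta}g^{(\R)}\rangle_{L^2(\R)}\prod_{p\in\Prime}\Big(\int_{\Zp}\mathds{1}_{\Zp}(t_p-s)\,e^{2\pi i\{rt_p\}_p}\,dt_p\Big). \]
The whole product equals $1$ exactly when $|s|_p\le1$ and $|r|_p\le1$ for every $p$, i.e.\ when $s,r\in\Q\cap\bigcap_p\Zp=\Z$, and $0$ otherwise, so again the $\Q$-indexed relations collapse to the $\Z$-indexed relations of (i), giving (i)$\iff$(iii). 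The main obstacle to watch is precisely this last step: one must confirm that the product over primes is genuinely finite, since $|s|_p\le1$ and $|r|_p\le1$ for all but finitely many $p$ and hence cofinitely many factors equal $1$ (so no convergence issue arises), and that the surviving nonzero terms reproduce the real inner products with the correct normalization $\s(\La)=\alpha\beta$; everything else is the coordinatewise factorization afforded by the tensor structure of $\SO(\AQ)$.
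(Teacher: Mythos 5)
Your proposal is correct and follows essentially the same route as the paper: both reduce all statements to the Wexler--Raz biorthogonality relations of Lemma \ref{le:wex-raz}, exploit the tensor structure to factor $\langle h,\pi(\lambda^{\circ})g\rangle$ into a real factor times $p$-adic factors, and use $\Z[1/p]\cap\Zp=\Z$ (resp.\ $\Q\cap\bigcap_p\Zp=\Z$) to collapse the relations to the integer-indexed ones of (i). The only cosmetic difference is that you evaluate the $p$-adic factor by direct character orthogonality over the compact group $\Zp$, where the paper argues via support disjointness for the translation part and Parseval for the modulation part, and you treat (ii) in detail where the paper treats (iii); these are interchangeable.
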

\begin{corollary} \label{cor:gab-frame-for-adele}
For any $g^{(\R)}\in\SO(\R)$ and $\alpha,\beta>0$ the following statements are equivalent. \begin{enumerate}[(i)]
\item The function $g^{(\R)}$ generates a Gabor frame for $L^{2}(\R)$ with respect to the lattice $\alpha\Z\times\beta\Z$.
\item For any $p\in \Prime$ the function $g=g^{(\R)}\otimes \mathds{1}_{\Zp}$ generates a Gabor frame for $L^{2}(\R\times\Qp)$ with respect to the lattice 
\[ \La = \psi_{\alpha}(\Z[1/p])\times \psi_{\beta}(\Z[1/p])= \{ (\alpha q, q, \beta r, r) : q,r \in \Z[1/p] \} \subset \R\times\Qp\times\R\times\Qp.\]
\item The function $g\in \SO(\AQ)$ defined by
\[g = g^{(\R)}\otimes \mathds{1}_{\Z_{2}}\otimes \mathds{1}_{\Z_{3}} \otimes \ldots ,\] 
generates a Gabor frame for $L^{2}(\AQ)$ with respect to the lattice 
\[ \Lambda = \varphi_{\alpha}(\Q)\times \varphi_{\beta}(\Q) = \big\{ \big(\alpha q, (q)_{p}, \beta r, (r)_{p}\big) : q,r\in \Q \big\} \subset \R\times\AQfin\times\R\times\AQfin.\]
\end{enumerate}
\end{corollary}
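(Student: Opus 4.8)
The plan is to reduce all three statements to the biorthogonality relations of Lemma \ref{le:wex-raz} and then to show that, in each of the three settings, these relations are literally the same system of equations. First I would compute the three adjoint lattices. For a lattice of the product form $\Gamma_1 \times \Gamma_2 \subset G \times \ghat$, with $\Gamma_1 \subseteq G$ the translation part and $\Gamma_2 \subseteq \ghat$ the modulation part, the commutation condition defining the adjoint lattice separates: $\pi(x,\omega)$ commutes with every $\pi(\gamma_1,\gamma_2)$ precisely when $\omega \in \Gamma_1^{\perp}$ and $x \in \Gamma_2^{\perp}$, so that $(\Gamma_1\times\Gamma_2)^{\circ} = \Gamma_2^{\perp} \times \Gamma_1^{\perp}$. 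Feeding in the annihilator identities $\varphi_\alpha(\Q)^{\perp} = \varphi_{1/\alpha}(\Q)$ from Proposition \ref{prop:lattice_in_adeles}, $\psi_\alpha(\Z[1/p])^{\perp} = \psi_{1/\alpha}(\Z[1/p])$ from its $\R\times\Qp$ analogue, and $(\alpha\Z)^{\perp} = \tfrac{1}{\alpha}\Z$, the real, the $\R\times\Qp$, and the adelic adjoint lattices become, respectively,
\[ \tfrac{1}{\beta}\Z \times \tfrac{1}{\alpha}\Z, \qquad \psi_{1/\beta}(\Z[1/p]) \times \psi_{1/\alpha}(\Z[1/p]), \qquad \varphi_{1/\beta}(\Q)\times\varphi_{1/\alpha}(\Q). \]
In all three cases the covolume is $s(\La) = \alpha\beta$.

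The heart of the argument is a single local computation. Because the generators are elementary tensors and each adjoint lattice point factors as a tensor of time-frequency shifts over the real and $p$-adic coordinates, the inner product $\langle h, \pi(\lambda^{\circ}) g\rangle$ splits as a product of a real factor and one $p$-adic factor per prime. For $m,n \in \Z[1/p]$ (or $m,n\in\Q$) I would compute
\[ \langle \mathds{1}_{\Zp}, \pi_{\Qp}(m,n)\mathds{1}_{\Zp}\rangle_{L^{2}(\Qp)} = \int_{\Zp \cap (m+\Zp)} e^{2\pi i \{nt\}_{p}}\, dt, \]
and observe that the domain of integration is nonempty only when $m \in \Zp$, in which case it equals $\Zp$ and the integral of the character $t \mapsto e^{2\pi i \{nt\}_{p}}$ over the compact group $\Zp$ is $1$ when $n \in \Zp$ and $0$ otherwise. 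Since $\Z[1/p]\cap\Zp = \Z$, this factor is the indicator $\mathds{1}_{\Z}(m)\,\mathds{1}_{\Z}(n)$. In the adele case the product over all primes is genuinely finite, because $q,r\in\Q$ lie in $\Zp$ for all but finitely many $p$, so no convergence issue arises; the product equals $1$ exactly when $q,r\in\bigcap_{p}(\Q\cap\Zp) = \Z$.

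With these pieces in place the three biorthogonality systems collapse onto one another. In the $\R\times\Qp$ case, writing a generic $\lambda^{\circ}\in\La^{\circ}$ as $(\psi_{1/\beta}(m),\psi_{1/\alpha}(n))$ with $m,n\in\Z[1/p]$ (and analogously with $\varphi$ and $q,r\in\Q$ for the adeles), the local factor forces $\langle h,\pi(\lambda^{\circ})g\rangle = 0$ whenever $m$ or $n$ fails to be an integer, matching the right-hand side $\alpha\beta\,\delta_{\lambda^{\circ},0}$ automatically; and when $m,n\in\Z$ the inner product reduces exactly to $\langle h^{(\R)},\pi(m/\beta,n/\alpha)g^{(\R)}\rangle$, evaluated at the point $(m/\beta,n/\alpha)$ of the real adjoint lattice $\tfrac{1}{\beta}\Z\times\tfrac{1}{\alpha}\Z$. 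Hence the condition of Lemma \ref{le:wex-raz} for statement (ii) (resp.\ (iii)) holds if and only if $\langle h^{(\R)},\pi(k/\beta,l/\alpha)g^{(\R)}\rangle = \alpha\beta\,\delta_{k,0}\,\delta_{l,0}$ for all $k,l\in\Z$, which is precisely the condition for statement (i).

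I expect the main obstacle to be bookkeeping rather than anything conceptual. The delicate points are pinning down the self-duality identifications of $\widehat{\R\times\Qp}$ and $\widehat{\A}_{\Q}$ together with the measure normalizations, so that the covolume comes out to be exactly $\alpha\beta$ in each setting, and carefully justifying the factorization of the $\SO$-functions and of the time-frequency shifts across the (restricted) tensor product, so that the clean local computation above is legitimate. Once the factorization is in hand, the rest is a direct translation between the three instances of Lemma \ref{le:wex-raz}.
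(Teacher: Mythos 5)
Your computational core is correct: the identity $(\Gamma_1\times\Gamma_2)^{\circ}=\Gamma_2^{\perp}\times\Gamma_1^{\perp}$, the covolume $s(\La)=\alpha\beta$, the local evaluation $\langle \mathds{1}_{\Zp}, E_{r}T_{q}\mathds{1}_{\Zp}\rangle=\mathds{1}_{\Zp}(q)\,\mathds{1}_{\Zp}(r)$, and the observation that $q\in\Zp$ for all $p\in\Prime$ iff $q\in\Z$ are exactly the ingredients of the paper's argument. But what you have written is, essentially verbatim, the paper's proof of Theorem \ref{th:gab-frame-for-adele}, which is a statement about dual \emph{pairs} $(g^{(\R)},h^{(\R)})$, whereas the statement you were asked to prove is Corollary \ref{cor:gab-frame-for-adele}, which concerns a \emph{single} window generating a frame. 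Lemma \ref{le:wex-raz} characterizes dual pairs with both windows in $\SO$; to ``reduce statement (i)/(ii)/(iii) to the biorthogonality relations'' you must first produce a dual window in $\SO$, and — crucially for your factorized computation — one of tensor form $h^{(\R)}\otimes\mathds{1}_{\Z_2}\otimes\cdots$. In the direction (i)$\Rightarrow$(ii),(iii) this is repairable by a sentence you never write: if $g^{(\R)}\in\SO(\R)$ generates a frame, its canonical dual $h^{(\R)}=S_{g^{(\R)},\alpha\Z\times\beta\Z}^{-1}g^{(\R)}$ again lies in $\SO(\R)$ \cite{grle04}, and then $h^{(\R)}\otimes\mathds{1}_{\Z_2}\otimes\cdots$ is a legitimate tensor-form dual candidate to which your computation applies.

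The genuine gap is in the converse directions (ii)$\Rightarrow$(i) and (iii)$\Rightarrow$(i). There the frame hypothesis hands you a canonical dual $h_0\in\SO(\AQ)$ (resp.\ $\SO(\R\times\Qp)$) which has \emph{no reason to be a tensor product}, so the inner products $\langle h_0,\pi(\lambda^{\circ})g\rangle$ do not split into a real factor times $p$-adic factors, and your local computation cannot be run; the claim that ``the three biorthogonality systems collapse onto one another'' is only established for tensor-form pairs. You need one extra idea, for instance: (a) set $h^{(\R)}(x):=\int_{\prod_{p}\Zp}h_0(x,t)\,dt$; restriction to the open subgroup $\R\times\prod_p\Zp$ followed by integration over the compact factor maps $\SO(\AQ)$ into $\SO(\R)$, and for adjoint-lattice points with $q,r\in\Z$ one has $\pi(\lambda^{\circ})g=(E_{r/\alpha}T_{q/\beta}g^{(\R)})\otimes\mathds{1}_{\prod_p\Zp}$, so the adelic biorthogonality relations for $(g,h_0)$, restricted to the integer points of $\La^{\circ}$, descend exactly to the real Wexler--Raz relations for $(g^{(\R)},h^{(\R)})$; or (b) show via the same local computation (applied to $\La$ rather than $\La^{\circ}$) that the adelic frame operator preserves the closed subspace $\{f^{(\R)}\otimes\mathds{1}_{\prod_p\Zp}:f^{(\R)}\in L^2(\R)\}$ and acts on it as $S_{g^{(\R)},\alpha\Z\times\beta\Z}$, so invertibility of $S_{g,\La}$ forces invertibility of the real frame operator. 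Either fix is short, but without one of them you have proved the Theorem, not the Corollary.
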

\begin{proof}[Proof of Theorem \ref{th:gab-frame-for-adele}]
It follows from the description of $\SO$ in Section \ref{sec:SO} that the functions $g$ and $h$ in (ii) and (iii) belong to $\SO(\R\times\Qp)$ and $\SO(\AQ)$, respectively. We only proof the equivalence between (i) and (iii) as the proof of the equivalence between (i) and (ii) is almost identical. Observe that $s(\La) = \alpha\beta$ and that the adjoint lattice to the lattice $\Lambda$ in (iii) is the discrete and co-compact subgroup of $\AQ\times\AQ$ given by
\[ \Lambda^{\circ} = \varphi_{1/\beta}(\Q)\times \varphi_{1/\alpha}(\Q) = \big\{ \big(\beta^{-1} q, (q)_{p}, \alpha^{-1} r, (r)_{p}\big) : q,r\in \Q \big\} \subset \R\times\AQfin\times\R\times\AQfin.\]
By Lemma \ref{le:wex-raz} the two functions $g$ and $h$ generate dual Gabor frames for $L^{2}(\AQ)$ if and only if they satisfy
\begin{equation} \label{eq:0507a} \langle h, 
\pi(\lambda^{\circ})g \rangle = \alpha \beta \, \delta_{\lambda^{\circ},0} \ \ \text{for all} \ \ \lambda^{\circ}\in \La^{\circ}.\end{equation}
The tensor product form of $g$ and $h$ implies that \eqref{eq:0507a} takes the form
\begin{equation} \label{eq:0507b} \langle h^{(\R)}, E_{\alpha^{-1}r} T_{\beta^{-1}q} g^{(\R)}\rangle \, \prod_{p\in\Prime} \langle \mathds{1}_{\Zp}, E_{r}T_{q} \mathds{1}_{\Zp}\rangle = \alpha\beta \, \delta_{(q,r),(0,0)} \ \ \text{for all} \ \ q,r\in \Q.\end{equation}
Observe that a fraction $q\in \Q$ belongs to $\Zp$ for all $p\in \Prime$ if and only if $q\in \Z$. Hence, if $q\in\Q\backslash\Z$, then, for some $p\in \Prime$ the support of the function $\mathds{1}_{\Zp}$ and $T_{q}\mathds{1}_{\Zp}$ is disjoint. 
This implies that \eqref{eq:0507b} is satisfied for all $q\in \Q\backslash\Z$. 
Since modulation $E_{r}$ is turned into translation $T_{r}$ by the Fourier transform on $\Qp$, it follows from Parsevals identity that also, for any given $r\in \Q\backslash\Z$ the inner-product 
\[ \langle \mathds{1}_{\Zp} , E_{r} \mathds{1}_{\Zp} \rangle_{L^{2}(\Qp)} = 0\]
for some $p\in \Prime$.
These two observations imply that we have verified \eqref{eq:0507b} for all $q,r\in \Q\backslash\Z$. It remains to show that \eqref{eq:0507b} holds for all $q,r\in\Z$.
If $q,r\in\Z$, then $\langle \mathds{1}_{\Zp}, E_{r}T_{q} \mathds{1}_{\Zp}\rangle = 1$
for all $p\in\Prime$ (we have normalized the Haar measure on each $\Qp$ such that $\int_{\Qp} \mathds{1}_{\Zp} = 1$). This implies that we only need to verify 
\begin{equation} \label{eq:0507d} \langle h^{(\R)}, E_{\alpha^{-1}r} T_{\beta^{-1}q} g^{(\R)}\rangle = \alpha\beta \delta_{(q,r),(0,0)} \ \ \text{for all} \ \ q,r\in\Z.\end{equation}
Lemma \ref{le:wex-raz} states that 
\eqref{eq:0507d} is satisfied if and only if the two Gabor system $\{E_{m\beta}T_{n\alpha}g^{(\R)}\}_{m,n\in\Z}$ and $\{E_{m\beta}T_{n\alpha}h^{(\R)}\}_{m,n\in\Z}$ are dual Gabor frames, which is (i).
\end{proof}

\paragraph{A Balian-Low theorem.}
The classical Balian-Low theorem for the Feichtinger algebra on the real numbers states that if $g \in \SO(\R)$ and $\alpha \beta = 1$, then $g$ cannot generate a Gabor frame for $L^2(\R)$ over the lattice $\alpha \Z \times \beta \Z \subseteq \R \times \widehat{\R}$. It is a natural question to ask whether the following general statement for locally compact abelian groups holds: Suppose $g \in \SO(G)$ and $\Lambda$ is a lattice in $G \times \widehat{G}$ of the form $\Gamma \times \Gamma^{\perp}$ with $\Gamma$ a lattice in $G$. Then $g$ does not generate a Gabor frame for $L^2(G)$ over $\Lambda$? 

It turns out that this does not  hold for general LCA groups $G$ and lattices $\Gamma$, see \cite{gr98}. However, it has been shown to hold for all lattices in second countable compactly generated LCA groups with noncompact component of the identity \cite{kaku98}. The groups $\A_\Q$ and $\R \times \Q_p$ are both second countable with noncompact component of the identity, but not compactly generated, so the result in \cite{kaku98} does not cover these groups. Note that the lattice $\Gamma = \varphi_{\alpha}(\Q)$ in $\AQ$ has annihilator $\Gamma^{\perp} = \varphi_{1/\alpha}(\Q)$. Thus, if $\alpha \beta = 1$, then $\varphi_{\alpha}(\Q) \times \varphi_{\beta}(\Q) \subseteq \AQ \times \AQ$ is a lattice of the form $\Gamma \times \Gamma^{\perp}$ (and similarly for the lattice $\psi_{\alpha}(\Z[1/p]) \times \psi_{\beta}(\Z[1/p]) \subseteq (\R \times \Q_p) \times (\R \times \Q_p)$).

Combining Corollary \ref{cor:gab-frame-for-adele} and the classical Balian-Low theorem for the Feichtinger algebra on $\R$, we obtain the following restricted Balian-Low type theorem for the groups $\AQ$ and $\R \times \Q_p$:

\begin{proposition}
Let $g \in \SO(\R)$, and let $\alpha \beta = 1$. Then the following hold:
\begin{enumerate}
\item[(i)] The function $g \otimes \mathds{1}_{\Z_2} \otimes \mathds{1}_{\Z_3} \otimes \cdots \in \SO(\AQ)$ does not generate a Gabor frame for  $L^2(\AQ)$ over the lattice $\varphi_{\alpha}(\Q) \times \varphi_{\beta}(\Q)$ in $\AQ \times \AQ$.
\item[(ii)] The function $g \otimes \mathds{1}_{\Z_p} \in \SO(\R \times \Q_p)$ does not generate a Gabor frame for $L^2(\R \times \Q_p)$ over the lattice $\psi_{\alpha}(\Z[1/p]) \times \psi_{\beta}(\Z[1/p])$ in $(\R \times \Q_p) \times (\R \times \Q_p)$.
\end{enumerate}
\end{proposition}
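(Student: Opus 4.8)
The plan is to prove the proposition as an immediate consequence of Corollary \ref{cor:gab-frame-for-adele} together with the classical Balian--Low theorem for $\SO(\R)$. The strategy is purely a transfer argument: the corollary asserts the logical equivalence of the frame property on $\R$, on $\R\times\Qp$, and on $\AQ$ for the specific tensor-product generators appearing here, so I only need to establish the failure of the frame property on $\R$ and then push the negation through the equivalences. No genuinely new frame-theoretic work is required; everything has been set up so that the proposition falls out of the two cited results.

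First I would observe that the hypothesis $\alpha\beta=1$ places $\alpha\Z\times\beta\Z$ at critical density in $\R\times\widehat{\R}$, which is exactly the regime of the classical Balian--Low theorem for the Feichtinger algebra recalled in the paragraph preceding the proposition. Hence, for any $g\in\SO(\R)$, that theorem guarantees that $g$ does \emph{not} generate a Gabor frame for $L^{2}(\R)$ over $\alpha\Z\times\beta\Z$. In the notation of Corollary \ref{cor:gab-frame-for-adele}, this is precisely the failure of statement~(i) of that corollary.

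To obtain part~(i) of the proposition, I would then invoke the equivalence (i)$\Leftrightarrow$(iii) of Corollary \ref{cor:gab-frame-for-adele}: since statement~(i) of the corollary fails, so does statement~(iii), which says exactly that $g\otimes\mathds{1}_{\Z_{2}}\otimes\mathds{1}_{\Z_{3}}\otimes\cdots\in\SO(\AQ)$ generates a Gabor frame for $L^{2}(\AQ)$ over $\varphi_{\alpha}(\Q)\times\varphi_{\beta}(\Q)$. Its failure is the assertion of part~(i). For part~(ii) I would argue identically, using instead the equivalence (i)$\Leftrightarrow$(ii) of the corollary, so that the failure of the frame property on $\R$ yields the failure of the frame property for $g\otimes\mathds{1}_{\Zp}\in\SO(\R\times\Qp)$ over $\psi_{\alpha}(\Z[1/p])\times\psi_{\beta}(\Z[1/p])$.

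There is no serious obstacle here, since the corollary has already absorbed all the analytic content; the only points worth verifying explicitly are bookkeeping matters, namely that $\alpha\beta=1$ is indeed the critical-density hypothesis invoked by the classical theorem, and that the generators named in the proposition coincide verbatim with those in statements~(ii) and~(iii) of the corollary. Both checks are immediate from the definitions. If anything, the subtlety lies not in the proof but in its interpretation: one should emphasize, as the surrounding text does, that this is a \emph{restricted} Balian--Low phenomenon, holding for the particular product lattices $\Gamma\times\Gamma^{\perp}$ arising from $\varphi_{\alpha}(\Q)$ and $\psi_{\alpha}(\Z[1/p])$, and not a general statement for all critical lattices in these non-compactly-generated groups, which the cited literature shows can fail.
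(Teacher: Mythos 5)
Your proposal is correct and follows exactly the paper's own argument: the paper derives the proposition precisely by combining the classical Balian--Low theorem for $\SO(\R)$ at critical density $\alpha\beta=1$ with the equivalences of Corollary \ref{cor:gab-frame-for-adele}, transferring the failure of the frame property from $\R$ to $\AQ$ and $\R\times\Qp$. Nothing is missing; your closing remark about the restricted nature of this Balian--Low phenomenon also matches the paper's discussion.
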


The above result only holds for functions in $\SO(\AQ)$ of the restricted form $g \otimes \mathds{1}_{\Z_2} \otimes \mathds{1}_{\Z_3} \otimes \cdots$ with $g \in \SO(\R)$ (and analogously for $\SO(\R \times \Q_p)$). An interesting question is whether one can obtain the same conclusion for all elements of $\SO(\AQ)$, and the first author is presently working on this.

\paragraph{Modulation spaces.}
Modulation spaces were invented by Feichtinger in the early 80s and can be defined on any locally compact abelian group, see e.g., \cite{fe83-4, fe03-1,fe06} and \cite{gr01}.

It is well-known that the modulation spaces can be described using Gabor frames that are constructed with windows in $\SO$ \cite{gr01}. This characterization of the modulation spaces and the construction of the Gabor frames for $L^{2}(\R\times\Qp)$ and $L^{2}(\AQ)$ in Theorem \ref{th:gab-frame-for-adele} lead to the following.
\begin{lemma} Let $g\in \SO(\AQ)$ and $\La$ be as in Theorem \ref{th:gab-frame-for-adele}(iii) such that $\{\pi(\lambda) g\}_{\lambda\in\Lambda}$ is a Gabor frame for $L^{2}(\AQ)$. The \emph{modulation space} $M^{s,t}(\AQ)$, $s,t\in[1,\infty]$ consists exactly of all elements $\sigma\in \SOprime(\AQ)$ such that
\[ \Vert \sigma \Vert_{M^{s,t}(\AQ)} := \bigg( \sum_{r\in\Q} \Big( \sum_{q\in\Q} \big\vert \sigma\big( E_{\beta r, (r)_{p}}T_{\alpha q, (q)_{p}} g\big) \big\vert^{s} \Big)^{t/s} \bigg)^{1/t}, \]
with the obvious modification if $s$ or $t$ equal $\infty$.
\end{lemma}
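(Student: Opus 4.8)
The plan is to obtain the statement as an instance of the general Gabor-frame (coorbit) characterization of modulation spaces, specialized to the product lattice $\La=\varphi_{\alpha}(\Q)\times\varphi_{\beta}(\Q)$. First I would fix a nonzero window $\phi\in\SO(\AQ)$ and recall the intrinsic definition of $M^{s,t}(\AQ)$ through the short-time Fourier transform $\sigma\mapsto\big(\lambda\mapsto\sigma(\pi(\lambda)\phi)\big)$, namely as the space of those $\sigma\in\SOprime(\AQ)$ for which the mixed-norm integral
\[ \bigg(\int_{\AQ}\Big(\int_{\AQ}\big\vert\sigma\big(E_{\omega}T_{x}\phi\big)\big\vert^{s}\,d\mu_{\AQ}(x)\Big)^{t/s}\,d\mu_{\AQ}(\omega)\bigg)^{1/t} \]
is finite (using the self-duality $\widehat{\AQ}\cong\AQ$ to integrate the frequency variable $\omega$ over a copy of $\AQ$, with the usual modification when $s$ or $t$ equals $\infty$). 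I would also recall that for $\phi\in\SO(\AQ)$ this quantity is independent of $\phi$ up to equivalence of norms, so that $M^{s,t}(\AQ)$ is well defined, and note the convention that the inner (exponent $s$) integration is over the time variable and the outer (exponent $t$) over frequency.

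The decisive structural observation is that the lattice is a \emph{product}: $\La=\Gamma_{1}\times\Gamma_{2}$ with time part $\Gamma_{1}=\varphi_{\alpha}(\Q)\subseteq\AQ$ and frequency part $\Gamma_{2}=\varphi_{\beta}(\Q)\subseteq\widehat{\AQ}$, and the maps $q\mapsto\varphi_{\alpha}(q)$, $r\mapsto\varphi_{\beta}(r)$ are group isomorphisms of $\Q$ onto $\Gamma_{1}$ and $\Gamma_{2}$. It is precisely this product structure that lets the mixed $(s,t)$-norm over $\La$ split into an inner $\ell^{s}$-sum over the time index $q$ and an outer $\ell^{t}$-sum over the frequency index $r$; for a non-separable lattice the two exponents could not be disentangled in this way.

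Next I would invoke the Gabor-frame discretization of the modulation-space norm. Since $g\in\SO(\AQ)$ generates a Gabor frame over $\La$, the cited result of Gr\"ochenig and Leinert \cite{grle04} guarantees that the canonical dual window $S_{g,\La}^{-1}g$ again lies in $\SO(\AQ)$. The general theory (see \cite{gr01}) then supplies two bounded maps: an \emph{analysis} map $\sigma\mapsto\{\sigma(\pi(\lambda)g)\}_{\lambda\in\La}$ from $M^{s,t}(\AQ)$ into the mixed sequence space $\ell^{s,t}(\La)$, and a \emph{synthesis} map $\{c_{\lambda}\}\mapsto\sum_{\lambda\in\La}c_{\lambda}\,\pi(\lambda)S_{g,\La}^{-1}g$ from $\ell^{s,t}(\La)$ into $M^{s,t}(\AQ)$, which are inverse to one another via the frame reconstruction \eqref{eq:frame-rep}. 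Consequently $\sigma\in M^{s,t}(\AQ)$ if and only if $\{\sigma(\pi(\lambda)g)\}_{\lambda\in\La}\in\ell^{s,t}(\La)$, with equivalent norms. Substituting $\pi(\varphi_{\alpha}(q),\varphi_{\beta}(r))=E_{\beta r,(r)_{p}}T_{\alpha q,(q)_{p}}$ and running the inner sum over $q\in\Q$ and the outer sum over $r\in\Q$ then converts the $\ell^{s,t}(\La)$-norm into exactly the stated double sum.

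The main obstacle is ensuring that the coorbit/Gabor machinery applies at the required level of generality. The group $\AQ$ is second countable but \emph{not} compactly generated, so one must rely on the form of the theory valid for arbitrary second countable LCA groups with $\SO$ windows rather than the classical $\R^{d}$ statements; the essential analytic inputs are the invariance of $\SO(\AQ)$ under time-frequency shifts and the stability of the canonical dual window in $\SO(\AQ)$. A secondary point is the genuinely mixed case $s\neq t$: here one must verify that the product structure of $\La$ is what legitimizes the splitting into the $q$- and $r$-sums, together with the routine boundedness estimates for the analysis and synthesis maps on $\ell^{s,t}$.
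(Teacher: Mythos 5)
Your proposal is correct and follows essentially the same route as the paper, which states this lemma without any proof, presenting it as an immediate consequence of the well-known Gabor-frame characterization of modulation spaces with $\SO$ windows \cite{gr01} combined with the frame construction of Theorem \ref{th:gab-frame-for-adele}(iii). The details you supply --- the intrinsic STFT definition, the $\SO(\AQ)$-membership of the canonical dual window via \cite{grle04}, the boundedness and mutual inversion of the analysis and synthesis maps, and the splitting of the mixed $\ell^{s,t}$-norm using the product structure $\La=\varphi_{\alpha}(\Q)\times\varphi_{\beta}(\Q)$ --- are precisely what the paper delegates to the cited literature.
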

In a similar way the modulation spaces on $\R\times\Qp$ can be defined using the Gabor frames constructed as in Theorem \ref{th:gab-frame-for-adele}(ii). 

It is well-known that $\SO\cong M^{1,1}$, $L^{2}\cong M^{2,2}$, and that $\SOprime = M^{\infty,\infty}$.

In recent years the modulation spaces have been used successfully as spaces of symbols in the theory of pseudo-differential operators. For example, the space $M^{\infty,1}$ coincides with the \emph{Sj\"ostrand class}. Among many we refer to, e.g., \cite{gr06-6,grst07}.

\subsection{Heisenberg modules}
\label{sec:projections}
As described in \cite{jalu18}, the construction of Gabor frames for $L^{2}(G)$ with time-frequency shifts from a closed subgroup $\La\subset G\times\ghat$ (where $G$ is any locally compact abelian group and $\ghat$ its dual group) is equivalent to the construction of certain projections in the twisted group $C^*$-algebra $C^*(\La, c)$ where $c$ denotes the cocycle coming from the Heisenberg representation \cite{ri88}. We state briefly some of the theory of \cite{jalu18} for the case of the Gabor frames for $L^{2}(\AQ)$ and $L^{2}(\R\times\Qp)$ constructed in Theorem \ref{th:gab-frame-for-adele}. 
This is the first example of a singly-generated Heisenberg module beyond the case of elementary locally compact abelian groups and it is not covered by the recent results in \cite{jalu18}. The equivalence bimodule that we use here is a suitable completion of the Feichtinger algebra on the respective groups.
In contrast, the theory presented in \cite{ri88} and \cite{lapa13,lapa15} use completions of the Schwartz-Bruhat space $\mathcal{S}(G)$ and $C_c(G)$, respectively, to construct equivalence bimodules between twisted group $C^*$-algebras of lattices in $G \times \widehat{G}$.


\paragraph{Heisenberg modules over $\R\times\Qp$.}
For $\alpha,\beta>0$ we define the following two Banach algebras:
\begin{align*} \lmodule & = \big\{ \mathbf{a} \in \mathsf{B}(L^{2}(\R\times\Qp)) \, : \, \mathbf{a} = \sum_{q,r\in\Z[1/p]} a(q,r) E_{\beta r,r}T_{\alpha q,q}, \ a\in \ell^{1}(\Z[1/p]^{2}) \big\}, \\
\rmodule & = \big\{ \mathbf{b} \in \mathsf{B}(L^{2}(\R\times\Qp)) \, : \, \mathbf{b} = \frac{1}{\alpha\beta}\sum_{q,r\in\Z[1/p]} b(q,r) \big( E_{\alpha^{-1} r,r}T_{\beta^{-1} q,q}\big)^{*}, \ b\in \ell^{1}(\Z[1/p]^{2}) \big\}.\end{align*}

Indeed, the norms $\Vert \mathbf{a} \Vert_{\mathcal{A}} = \Vert a \Vert_{1}$, $\Vert \mathbf{b}\Vert_{\rmodule} = \Vert b \Vert_{1}$ (where $\mathbf{a},a,\mathbf{b}$ and $b$ are related as above) turn $\lmodule$ and $\rmodule$ into involutive Banach algebras with respect to composition of operators and where the involution is the $L^{2}$-adjoint. 

Observe that $\lmodule$ and $\rmodule$ are \emph{not} generated by finitely many unitaries as is the case of the noncommutative 2-torus studied in relation to the construction of Gabor frames in $L^{2}(\R)$ \cite{lu11}.

Elements in $\mathcal{A}$ and $\rmodule$ act on functions in $L^{2}(\R\times\Qp)$ from the left and the right, respectively, by
\begin{align*} 
\mathbf{a}\cdot f &:= \sum_{q,r\in\Z[1/p]} a(q,r) E_{\beta r,r}T_{\alpha q,q} f, \ f\in L^{2}(\R\times\Qp), \ \mathbf{a} \in \mathcal{A}, \\
f\cdot \mathbf{b} &:= \frac{1}{\alpha\beta} \sum_{q,r\in\Z[1/p]} b(q,r) \big( E_{\alpha^{-1} r,r}T_{\beta^{-1} q,q}\big)^{*} f, \ f\in L^{2}(\R\times\Qp), \ \mathbf{b} \in \mathcal{B}.
\end{align*}
We define an $\lmodule$- and $\rmodule$-valued inner product in the following way:
\begin{align*} 
& \lhs{\cdot}{\cdot} : \SO(\R\times\Qp) \times \SO(\R\times\Qp) \to \lmodule, \\
& \qquad \lhs{f}{g} = \sum_{q,r\in\Z[1/p]} \langle f, E_{\beta r,r}T_{\alpha q,q}  g\rangle \,  E_{\beta r,r}T_{\alpha q,q}, \\
& \rhs{\cdot}{\cdot} : \SO(\R\times\Qp) \times \SO(\R\times\Qp) \to \rmodule, \\
& \qquad \rhs{f}{g} = \frac{1}{\alpha\beta} \sum_{q,r\in\Z[1/p]} \langle g, \big( E_{\alpha^{-1} r,r}T_{\beta^{-1} q,q}\big)^{*} f\rangle \,  \big( E_{\alpha^{-1} r,r}T_{\beta^{-1} q,q}\big)^{*}. 
\end{align*}
One can show that
\[ \lhs{f}{g}\cdot h = f \cdot \rhs{g}{h} \ \ \text{for all} \ \ f,g,h\in\SO(\R\times\AQ).\]
Denote by $A$ and $B$ the $C^*$-closures of $\mathcal{A}$ and $\mathcal{B}$ inside $\mathsf{B}(L^2(\R \times \Q_p))$, respectively. The actions and algebra-valued inner products defined give $\SO(\R \times \Q_p)$ the structure of a pre-imprimitivity $\mathcal{A}$-$\mathcal{B}$-bimodule. It can thus be completed into an imprimitivity $A$-$B$-bimodule, a \emph{Heisenberg module} in the sense of Rieffel \cite{ri88}, which sets up a Morita equivalence between $A$ and $B$. It is worth noting that in this case, $A$ is a twisted group $C^*$-algebra on the group $\Z[1/p] \times \Z[1/p]$. These have been termed \emph{noncommutative solenoids} by F. Latrémolière and J. Packer and are studied in \cite{lapa13,lapa15,lapa17}, where they prove that $B$ is also a noncommutative solenoid.
\begin{proposition}[{\cite[Theorem 3.14]{jalu18}}] Let $g$, $h$ be two functions in $\SO(\R\times\Qp)$ and consider the lattices
\begin{align*} \Lambda & = \big\{ (\alpha q, q, \beta r, r) : q,r\in\Z[1/p] \big\}\ \ \text{and} \ \ \Lambda^{\circ} = \big\{ (\beta^{-1} q, q, \alpha^{-1} r, r ) : q,r\in\Z[1/p]  \big\} \end{align*}
in $(\R\times\Qp)^{2}$ with $\alpha,\beta>0$ as in Theorem \ref{th:gab-frame-for-adele}. The following statements are equivalent.
\begin{enumerate}[(i)]
\item $f = \lhs{f}{g}\cdot h$ for all $f\in\SO(\R\times\Qp)$.
\item $ \rhs{g}{h}$ is the identity operator on $L^{2}(\R\times\Qp)$.
\item $g$ and $h$ generate dual Gabor frames with respect to $\La$ for $L^{2}(\R\times\Qp)$.
\item $\lhs{g}{h}$ is an idempotent operator from $L^{2}(\R\times\Qp)$ onto $
V := \overline{\textnormal{span}}\big\{ \pi(\lambda^{\circ}) g\big\}_{\lambda^{\circ}\in\La^{\circ}}$.
\item $f = g \cdot \rhs{h}{f}$ for all $f\in \SO(\R\times\Qp)\cap V$.
\end{enumerate}
\end{proposition}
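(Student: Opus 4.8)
The plan is to route everything through the three ``core'' statements (i), (ii) and (iii), which I expect to be quickly interequivalent, and then to attach (iv) and (v) on either side. First I would expand the left inner product and the left action to observe that, for $f\in\SO(\R\times\Qp)$,
\[
\lhs{f}{g}\cdot h=\sum_{\la\in\La}\langle f,\pi(\la)g\rangle\,\pi(\la)h,
\]
so that statement (i) is literally the dual Gabor frame reconstruction \eqref{eq:frame-rep} for the pair $g,h$ along $\La$; since $\SO(\R\times\Qp)$ is dense in $L^{2}(\R\times\Qp)$ and both sides are continuous, (i) is equivalent to (iii). Invoking the associativity identity $\lhs{f}{g}\cdot h=f\cdot\rhs{g}{h}$, statement (i) reads $f\cdot\rhs{g}{h}=f$ for all $f$, and by the same density and continuity this is exactly $\rhs{g}{h}=\mathrm{Id}$, i.e. (ii). As a cross-check I would expand $\rhs{g}{h}$ into time-frequency shifts along $\Lac$ and note that $\rhs{g}{h}=\mathrm{Id}$ forces, by linear independence of the shift operators along $\Lac$, the biorthogonality $\langle h,\pi(\lac)g\rangle=\s(\La)\,\delta_{\lac,0}$, which is (iii) by Wexler--Raz (Lemma \ref{le:wex-raz}). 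This pins down (i)$\Leftrightarrow$(ii)$\Leftrightarrow$(iii).

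Next I would derive (iv) and (v) from the core. Assuming (ii), the relation $\rhs{h}{g}=\rhs{g}{h}^{*}=\mathrm{Id}$ gives $\lhs{g}{h}\cdot g=g\cdot\rhs{h}{g}=g$, and the module-product identity $\lhs{a}{b}\,\lhs{c}{d}=\lhs{\lhs{a}{b}\cdot c}{d}$ (itself a consequence of associativity) then yields $\lhs{g}{h}^{2}=\lhs{\lhs{g}{h}\cdot g}{h}=\lhs{g}{h}$, so $P:=\lhs{g}{h}$ is idempotent. For the range I would use the associativity identity in the form $Pf=\lhs{g}{h}\cdot f=g\cdot\rhs{h}{f}$, whose right-hand side is a norm limit of time-frequency shifts of $g$ along $\Lac$; hence $\operatorname{Range}(P)\subseteq V$ unconditionally. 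Conversely, $P$ lies in the closure of $\lmodule$, which is generated by shifts along $\La$, and these commute with every $\pi(\lac)$ by the very definition of the adjoint lattice; combined with $Pg=g$ this shows $P\pi(\lac)g=\pi(\lac)g$, so $P$ fixes a spanning set of $V$ and $\operatorname{Range}(P)=V$. This is (iv), and restricting the idempotent $P$ to its range and rewriting $Pf=g\cdot\rhs{h}{f}$ gives (v) at once.

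The main obstacle is closing the loop, that is, upgrading the localized statements (iv) and (v)---which only constrain $P$ on the subspace $V$---back to the global identity (ii). I would first observe that (v) alone already forces idempotency: for $f\in\SO(\R\times\Qp)$ one has $Pf=g\cdot\rhs{h}{f}\in\SO(\R\times\Qp)\cap V$, so applying (v) to $Pf$ gives $P^{2}f=Pf$, and together with the automatic inclusion $\operatorname{Range}(P)\subseteq V$ this recovers (iv). The delicate point is that $\{\pi(\lac)g\}_{\lac\in\Lac}$ need not be linearly independent, so one cannot simply read the biorthogonality coefficients off the reconstruction $Pf=f$ on $V$. To circumvent this I would pass back to the operator side: the content of (iv) is that $P$ is an idempotent in $\overline{\lmodule}$ fixing $g$ and commuting with all $\pi(\lac)$, and I would translate $Pg=g$, expanded as $g\cdot\rhs{h}{g}=g$, into a statement about the element $\rhs{h}{g}\in\rmodule$ using the faithfulness and fullness of the imprimitivity-bimodule inner products from Rieffel's theory, extracting the Fourier coefficients of $\rhs{g}{h}$ along $\Lac$ and matching them to those of $\mathrm{Id}$. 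Concretely, I expect to use the canonical trace on the twisted group algebra together with the module-product identity to evaluate $\langle h,\pi(\lac)g\rangle$ for every $\lac\in\Lac$, thereby reaching the Wexler--Raz biorthogonality and hence (iii). This trace/fullness step, rather than any of the algebraic manipulations, is where the real work lies.
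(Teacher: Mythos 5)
There is no proof in the paper to compare your argument against: the proposition is quoted from \cite[Theorem 3.14]{jalu18} and the paper itself supplies no argument, so your proposal has to stand on its own. Its first half does. Identifying $\lhs{f}{g}\cdot h$ with the mixed frame operator $\sum_{\la\in\La}\langle f,\pi(\la)g\rangle\pi(\la)h$, and combining the associativity identity with density of $\SO(\R\times\Qp)$ in $L^{2}(\R\times\Qp)$ (plus uniqueness of $\ell^{1}$-coefficients of time--frequency shift series for the Wexler--Raz cross-check against Lemma \ref{le:wex-raz}) correctly gives (i)$\Leftrightarrow$(ii)$\Leftrightarrow$(iii); the algebra behind (ii)$\Rightarrow$(iv)$\Leftrightarrow$(v) is also sound. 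In fact your manipulations show something worth isolating: each of (iv) and (v) is equivalent to the single identity $\lhs{g}{h}\cdot g=g$, because the range inclusion $\lhs{g}{h}(L^{2})\subseteq V$ and the commutation of $\lhs{g}{h}$ with every $\pi(\lac)$, $\lac\in\Lac$, come for free.

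The genuine gap is the converse passage from (iv)/(v) back to (i)--(iii), which you defer to a trace/fullness argument; the problem is not that this step is hard but that it cannot be carried out, because as transcribed here the implication is false. Take $\alpha\beta>1$, let $g_{0}$ be the Gaussian, which generates a Gabor frame for $L^{2}(\R)$ over $\beta^{-1}\Z\times\alpha^{-1}\Z$ since $(\alpha\beta)^{-1}<1$, and let $g^{(\R)}=S^{-1/2}g_{0}\in\SO(\R)$ be its canonical tight window (it remains in $\SO(\R)$ by \cite{grle04}), so that $\sum_{\lac}\langle f,\pi(\lac)g^{(\R)}\rangle\pi(\lac)g^{(\R)}=f$ on $L^{2}(\R)$. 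Put $g=g^{(\R)}\otimes\mathds{1}_{\Zp}$ and $h=\alpha\beta\,g$. By Theorem \ref{th:gab-frame-for-adele} applied with parameters $(1/\beta,1/\alpha)$, the pair $(g,g)$ reproduces $L^{2}(\R\times\Qp)$ over $\Lac=\psi_{1/\beta}(\Z[1/p])\times\psi_{1/\alpha}(\Z[1/p])$; unwinding the definitions of the right inner product and right action (the phases cancel after reindexing $\lac\mapsto-\lac$) this reads $g\cdot\rhs{h}{f}=\tfrac{1}{\alpha\beta}\sum_{\lac\in\Lac}\langle f,\pi(\lac)h\rangle\pi(\lac)g=f$ for all $f\in\SO(\R\times\Qp)$, whence $\lhs{g}{h}=\mathrm{Id}$ by associativity and density, and $V=L^{2}(\R\times\Qp)$ because $\{\pi(\lac)g\}_{\lac\in\Lac}$ is a (tight) frame. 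Thus (iv) and (v) hold, while (iii) fails: a frame over $\La$ would, by Corollary \ref{cor:gab-frame-for-adele}, force $g^{(\R)}$ to generate a frame over $\alpha\Z\times\beta\Z$ with $\s(\La)=\alpha\beta>1$, contradicting the density theorem \cite{gr01,jale16-2}. The diagnosis is that (iv) amounts only to $g\cdot\bigl(\rhs{h}{g}-1\bigr)=0$, and neither faithfulness nor fullness of the module inner products licenses cancelling the single element $g$: the annihilator $\{b\in\rmodule : g\cdot b=0\}$ is trivial if $\rhs{g}{g}$ is invertible, i.e.\ if $\{\pi(\lac)g\}_{\lac\in\Lac}$ is a Riesz sequence, i.e.\ (by the duality theorem of \cite{jale16-2}) if $\{\pi(\la)g\}_{\la\in\La}$ is a frame --- which is the very thing you are trying to prove, and the example shows the annihilator is genuinely nontrivial otherwise. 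Equivalently, the trace extracts from $\lhs{g}{h}$ only the coefficients $\langle g,\pi(\la)h\rangle$ along $\La$, whereas (iii) is the Wexler--Raz condition of Lemma \ref{le:wex-raz} on the coefficients $\langle h,\pi(\lac)g\rangle$ along $\Lac$, and the operator $\lhs{g}{h}$ does not determine the latter. So the equivalence of (iv),(v) with (i)--(iii) requires the frame (equivalently, Riesz-sequence) property of $g$ as an additional hypothesis --- under which your coefficient-uniqueness idea closes the loop in one line --- and the step you postponed is precisely where the statement, as given, breaks down.
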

We close with a result on projections in $\lmodule$ which follows from Theorem \ref{th:gab-frame-for-adele} by choosing for $g^{(\R)}$ the Gaussian $g^{(\R)}_0(t)=e^{-\pi t^2}$.
\begin{proposition}\label{prop:proj-ncsol}
Let $g \in \SO(\R\times\Qp)$ be the function defined by $g = g^{(\R)}_0\otimes \mathds{1}_{\Z_{p}}$,
where $g_0^{(\R)}$ is the Gaussian, and consider the lattice
\[ \Lambda = \varphi_{\alpha}(\Z[1/p])\times \varphi_{\beta}(\Z[1/p]) = \big\{ (\alpha q, q, \beta r, r) : q,r\in\Z[1/p] \big\} \subset \R\times\Qp\times\R\times\Qp.\]
Then $\lhs{S_{g,\Lambda}^{-1/2} g}{S_{g,\Lambda}^{-1/2} g}$ is a projection in $\lmodule$ if and only if $\alpha\beta<1$. 
\end{proposition}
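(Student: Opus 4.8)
The plan is to reduce the statement entirely to frame theory on the real line, using Corollary \ref{cor:gab-frame-for-adele} to transport everything from $\R$ to $\R\times\Qp$ and using the dictionary between Parseval frames and projections supplied by \cite[Theorem 3.14]{jalu18}. First I would record the reduction. By Corollary \ref{cor:gab-frame-for-adele}, $g=g_0^{(\R)}\otimes\mathds{1}_{\Z_p}$ generates a Gabor frame for $L^{2}(\R\times\Qp)$ with respect to $\Lambda$ if and only if the Gaussian $g_0^{(\R)}$ generates a Gabor frame for $L^{2}(\R)$ over $\alpha\Z\times\beta\Z$. Since $g_0^{(\R)}\in\SO(\R)$, the latter holds precisely when $\alpha\beta<1$: existence for $\alpha\beta<1$ is the cited result (A) in Section \ref{sec:gabor}; for $\alpha\beta=1$ the classical Balian--Low theorem for $\SO(\R)$ rules out the frame property; and for $\alpha\beta>1$ the density theorem for Gabor systems in $L^2(\R)$ forbids it. Thus $g$ is a frame generator if and only if $\alpha\beta<1$, and in that case the frame operator $S_{g,\Lambda}$ (bounded, since $g\in\SO(\R\times\Qp)$ is a Bessel vector) is positive and invertible.

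For the ``if'' direction, when $\alpha\beta<1$ the operator $S_{g,\Lambda}$ is positive and invertible, so $\tilde g:=S_{g,\Lambda}^{-1/2}g$ is well defined, and by the Gröchenig--Leinert theorem \cite{grle04} it again lies in $\SO(\R\times\Qp)$; consequently $\lhs{\tilde g}{\tilde g}$ is a genuine element of $\lmodule$. Because $S_{g,\Lambda}$ commutes with the time-frequency shifts $\pi(\lambda)$, $\lambda\in\Lambda$, the system $\{\pi(\lambda)\tilde g\}_{\lambda\in\Lambda}$ is a Parseval frame, so $\tilde g$ is its own dual generator; that is, statement (iii) of \cite[Theorem 3.14]{jalu18} holds with $g=h=\tilde g$. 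The equivalence with statement (iv) there then yields that $\lhs{\tilde g}{\tilde g}$ is idempotent. Since the bimodule inner product satisfies $\lhs{f}{g}^{*}=\lhs{g}{f}$, the element $\lhs{\tilde g}{\tilde g}$ is automatically self-adjoint, and a self-adjoint idempotent is a projection. Hence $\lhs{S_{g,\Lambda}^{-1/2}g}{S_{g,\Lambda}^{-1/2}g}$ is a projection in $\lmodule$.

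For the ``only if'' direction, when $\alpha\beta\geq 1$ the function $g$ fails to generate a frame, so $S_{g,\Lambda}$ is not bounded below and therefore not invertible; the expression $S_{g,\Lambda}^{-1/2}g$ is then not a well-defined Parseval (indeed not even a Bessel) generator in $\SO(\R\times\Qp)$, and $\lhs{S_{g,\Lambda}^{-1/2}g}{S_{g,\Lambda}^{-1/2}g}$ is not a projection in $\lmodule$. A cleaner way to exhibit the obstruction, which I would include, uses the canonical normalized trace $\tr$ on $\lmodule$ given by $\tr(\mathbf{a})=a(0,0)$, for which $\tr(\lhs{f}{g})=\langle f,g\rangle_{L^{2}}$ and $\tr(I)=1$. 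Any Parseval generator satisfies $\Vert\tilde g\Vert_{2}^{2}=s(\Lambda)=\alpha\beta$, read off from the coefficient of the identity in the Janssen/fundamental-identity representation of $S_{g,\Lambda}=I$; thus a projection $\lhs{\tilde g}{\tilde g}$ would have trace $\alpha\beta$. As $\tr$ is faithful and normalized, a projection has trace in $[0,1]$, with value $1$ only for the identity, and $\tr=1$ would force $\{\pi(\lambda)\tilde g\}_{\lambda\in\Lambda}$ to be an orthonormal basis — impossible for $\tilde g\in\SO$ at $\alpha\beta=1$ by Balian--Low, and impossible for $\alpha\beta>1$ by the density theorem.

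The main obstacle I anticipate is precisely the ``only if'' direction: one must pin down how the Gaussian fails to be a frame generator at and above the critical density $\alpha\beta=1$, and interpret $S_{g,\Lambda}^{-1/2}g$ when $S_{g,\Lambda}$ is non-invertible. The borderline case $\alpha\beta=1$ is the genuinely delicate one, and it is exactly here that the Balian--Low theorem for $\SO(\R)$ — transported to $\R\times\Qp$ via Corollary \ref{cor:gab-frame-for-adele} — does the decisive work; the case $\alpha\beta>1$ is softer and follows from the density theorem.
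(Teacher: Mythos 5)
Your proposal is correct and follows essentially the same route as the paper: reduce to the real line via Corollary \ref{cor:gab-frame-for-adele}, invoke the Lyubarskii--Seip characterization that the Gaussian generates a frame over $\alpha\Z\times\beta\Z$ iff $\alpha\beta<1$ (which you partially re-derive from the Balian--Low and density theorems), and pass between Parseval/tight frames and projections via the dictionary of \cite[Theorem 3.14]{jalu18} (the paper cites \cite{lu11} for this same equivalence). Your supplementary trace argument for the ``only if'' direction and the explicit check that $S_{g,\Lambda}^{-1/2}g\in\SO(\R\times\Qp)$ via \cite{grle04} are details the paper leaves implicit, but they do not change the approach.
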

\begin{proof}
  As shown in \cite{lu11} the construction of projections of the form $\lhs{g}{g}$ is equivalent to the construction of tight Gabor frames. Since $S_{g,\Lambda}^{-1/2} g$ generates the canonical tight Gabor frame the result follows from the result of Lyubarskii-Seip that $\{\pi(\alpha k,\beta l)g^{(\R)}_0\}_{k,l\in\Z}$ is a Gabor frame if and only if $\alpha\beta<1$.
 
\end{proof}

\paragraph{Heisenberg modules over $\AQ$.}
For $\alpha,\beta>0$ we define the following two Banach algebras:
\begin{align*} \lmodule & = \big\{ \mathbf{a} \in \mathsf{B}(L^{2}(\AQ)) \, : \, \mathbf{a} = \sum_{q,r\in\Q} a(q,r) E_{\beta r,(r)_{p}}T_{\alpha q,(q)_{p}}, \ a\in \ell^{1}(\Q^{2}) \big\}, \\
\rmodule & = \big\{ \mathbf{b} \in \mathsf{B}(L^{2}(\AQ)) \, : \, \mathbf{b} = \frac{1}{\alpha\beta}\sum_{q,r\in\Q} b(q,r) \big( E_{\alpha^{-1} r,(r)_{p}}T_{\beta^{-1} q,(q)_{p}}\big)^{*}, \ b\in \ell^{1}(\Q^{2}) \big\}.\end{align*}

The norms $\Vert \mathbf{a} \Vert_{\mathcal{A}} = \Vert a \Vert_{1}$, $\Vert \mathbf{b}\Vert_{\rmodule} = \Vert b \Vert_{1}$  turn $\lmodule$ and $\rmodule$ into involutive Banach algebras with respect to composition of operators and where the involution is the $L^{2}$-adjoint. 
As in the case of $\R\times\Qp$ described before, $\lmodule$ and $\rmodule$ are not generated by finitely many unitaries.
Elements in $\mathcal{A}$ and $\rmodule$ act on functions in $L^{2}(\AQ)$ from the left and the right, respectively, by
\begin{align*} 
\mathbf{a}\cdot f &:= \sum_{q,r\in\Q} a(q,r) E_{\beta r,(r)_{p}}T_{\alpha q,(q)_{p}} f, \ f\in L^{2}(\AQ), \ \mathbf{a} \in \mathcal{A}, \\
f\cdot \mathbf{b} &:= \frac{1}{\alpha\beta} \sum_{q,r\in\Q} b(q,r) \big( E_{\alpha^{-1} r,(r)_{p}}T_{\beta^{-1} q,(q)_{p}}\big)^{*} f, \ f\in L^{2}(\AQ), \ \mathbf{b} \in \mathcal{B}.
\end{align*}
We define an $\lmodule$- and $\rmodule$-valued inner product in the following way:
\begin{align*} 
& \lhs{\cdot}{\cdot} : \SO(\AQ) \times \SO(\AQ) \to \lmodule, \\
& \qquad \lhs{f}{g} = \sum_{q,r\in\Q} \langle f, E_{\beta r,(r)_{p}}T_{\alpha q,(q)_{p}}  g\rangle \,  E_{\beta r,(r)_{p}}T_{\alpha q,(q)_{p}}, \\
& \rhs{\cdot}{\cdot} : \SO(\AQ) \times \SO(\AQ) \to \rmodule, \\
& \qquad \rhs{f}{g} = \frac{1}{\alpha\beta} \sum_{q,r\in\Q} \langle g, \big( E_{\alpha^{-1} r,(r)_{p}}T_{\beta^{-1} q,(q)_{p}}\big)^{*} f\rangle \,  \big( E_{\alpha^{-1} r,(r)_{p}}T_{\beta^{-1} q,(q)_{p}}\big)^{*}. 
\end{align*}
One can show that
\[ \lhs{f}{g}\cdot h = f \cdot \rhs{g}{h} \ \ \text{for all} \ \ f,g,h\in\SO(\AQ).\]
In this case, as in the case with the group $\R \times \Q_p$, we obtain from $\SO(\AQ)$ an imprimitivity $A$-$B$-bimodule, where $A$ and $B$ denote the $C^*$-closures of $\mathcal{A}$ and $\mathcal{B}$ in $\mathsf{B}(L^2(\AQ))$, respectively.
\begin{proposition}[{\cite[Theorem 3.14]{jalu18}}] Let $g$, $h$ be two functions in $\SO(\AQ)$ and consider the lattices
\begin{align*} \Lambda & = \big\{ ( \alpha q, (q)_{p}, \beta r, (r)_{p} ) : q,r\in\Q \big\} \ \ \text{and} \ \ \Lambda^{\circ} = \big\{( \beta^{-1} q, (q)_{p}, \alpha^{-1} r, (r)_{p} ) : q,r \in \Q \big\} \end{align*}
in $\AQ^{2}$ with $\alpha,\beta>0$ as in Theorem \ref{th:gab-frame-for-adele}. The following statements are equivalent.
\begin{enumerate}[(i)]
\item $f = \lhs{f}{g}\cdot h$ for all $f\in\SO(\AQ)$.
\item $ \rhs{g}{h}$ is the identity operator on $L^{2}(\AQ)$.
\item $g$ and $h$ generate dual Gabor frames with respect to $\La$ for $L^{2}(\AQ)$.
\item $\lhs{g}{h}$ is an idempotent operator from $L^{2}(\AQ)$ onto $
V := \overline{\textnormal{span}}\big\{ \pi(\lambda^{\circ}) g\big\}_{\lambda^{\circ}\in\La^{\circ}}$.
\item $f = g \cdot \rhs{h}{f}$ for all $f\in \SO(\AQ)\cap V$.
\end{enumerate}
\end{proposition}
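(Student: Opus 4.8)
The plan is to take condition (iii) as the anchor and connect every other condition to it, using the Wexler--Raz biorthogonality of Lemma \ref{le:wex-raz} together with the compatibility identity $\lhs{f}{g}\cdot h = f\cdot\rhs{g}{h}$ and the fact that, by the very definition of the adjoint lattice $\La^{\circ}$, every operator in $\lmodule$ commutes with every $\pi(\lambda^{\circ})$, $\lambda^{\circ}\in\La^{\circ}$. A preliminary analytic remark underlies everything: since $g,h\in\SO(\AQ)$, the coefficient sequences $(\langle g,\pi(\lambda)h\rangle)_{\lambda\in\La}$ and $(\langle h,\pi(\lambda^{\circ})g\rangle)_{\lambda^{\circ}\in\La^{\circ}}$ lie in $\ell^{1}$, so the defining series of $\lhs{g}{h}$ and $\rhs{g}{h}$ converge in the norms of $\lmodule$ and $\rmodule$ and the objects in (i)--(v) are well defined.

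First I would prove (ii)$\iff$(iii). Writing out the definition gives $\rhs{g}{h} = \tfrac{1}{\alpha\beta}\sum_{\lambda^{\circ}\in\La^{\circ}}\langle h,\pi(\lambda^{\circ})^{*} g\rangle\,\pi(\lambda^{\circ})^{*}$, and since $\pi(\lambda^{\circ})^{*}$ is a unimodular multiple of $\pi(-\lambda^{\circ})$ with $-\lambda^{\circ}$ again ranging over $\La^{\circ}$, these coefficients are, up to phase, exactly the Wexler--Raz quantities $\langle h,\pi(\lambda^{\circ})g\rangle$. Thus (iii), i.e.\ $\langle h,\pi(\lambda^{\circ})g\rangle = \s(\La)\delta_{\lambda^{\circ},0}=\alpha\beta\,\delta_{\lambda^{\circ},0}$, collapses the sum to the single term $\pi(0)^{*}=\mathrm{Id}$, giving $\rhs{g}{h}=\mathrm{Id}$; conversely the family $\{\pi(\lambda^{\circ})^{*}\}_{\lambda^{\circ}\in\La^{\circ}}$ is linearly independent (faithfulness of the Heisenberg/twisted-group representation), so $\rhs{g}{h}=\mathrm{Id}$ forces the coefficient comparison that is Wexler--Raz. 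The equivalence (i)$\iff$(ii) is then immediate from the compatibility identity: $\lhs{f}{g}\cdot h = f\cdot\rhs{g}{h}=\rhs{g}{h}(f)$, so (i) says that $\rhs{g}{h}$ fixes every $f\in\SO(\AQ)$, which by density of $\SO(\AQ)$ in $L^{2}(\AQ)$ and boundedness is exactly (ii).

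Next I would derive (iv) and (v) from (iii). Assuming Wexler--Raz one checks $\rhs{h}{g}=\mathrm{Id}$ as well, whence $P:=\lhs{g}{h}$ fixes $g$ via $\lhs{g}{h}\cdot g = g\cdot\rhs{h}{g}=g$, and since $P$ commutes with every $\pi(\lambda^{\circ})$ it fixes each generator $\pi(\lambda^{\circ})g$ of $V$; combined with the always-valid inclusion $\mathrm{ran}(P)\subseteq V$ coming from $\lhs{g}{h}\cdot f = g\cdot\rhs{h}{f}\in\overline{g\cdot\rmodule}=V$, this shows $P$ is idempotent with range exactly $V$, which is (iv), and a fortiori $P|_{V}=\mathrm{Id}$, which is (v). For the converse I would exploit the same inclusion $\mathrm{ran}(P)\subseteq V$: condition (v) forces $P^{2}=P$ and $\mathrm{ran}(P)=V$, so (iv)$\iff$(v); to close the cycle I would read off the biorthogonality relations from $P(g)=g$, expanded as $\sum_{\lambda\in\La}\langle g,\pi(\lambda)h\rangle\pi(\lambda)g=g$ and paired against the commuting shifts $\pi(\lambda^{\circ})g$, once more invoking linear independence of the time-frequency shifts.

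The main obstacle I anticipate is precisely this last step: propagating the local information that the idempotent $P=\lhs{g}{h}$ fixes $V$ back to the global Wexler--Raz identity (equivalently to $\rhs{g}{h}=\mathrm{Id}$). The forward implications (iii)$\Rightarrow$(i),(ii),(iv),(v) are routine given the compatibility identity and the adjoint-lattice commutation, but ruling out a merely ``partial'' dual pair requires the fullness of the bimodule together with the faithfulness/linear independence of $\{\pi(\lambda)\}_{\lambda\in\La}$; this is the structural heart of \cite[Theorem 3.14]{jalu18}, and in the present concrete setting it is supplied by the $\ell^{1}$-convergence guaranteed by $g,h\in\SO(\AQ)$ and by the explicit description of $\La$ and $\La^{\circ}$ as mutually adjoint lattices.
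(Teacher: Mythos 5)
First, a point of order: the paper does not actually prove this proposition --- it is quoted, without proof, from \cite[Theorem 3.14]{jalu18} --- so there is no in-paper argument to compare yours against, and your proposal has to stand on its own. The parts of it that concern (i), (ii), (iii), and the forward implications are essentially sound: the $\ell^{1}$-summability of $(\langle g,\pi(\lambda)h\rangle)_{\lambda\in\La}$ for $g,h\in\SO(\AQ)$, the identification (up to phases, via $\pi(\lambda^{\circ})^{*}=\overline{\omega(x)}\,\pi(-\lambda^{\circ})$) of the coefficients of $\rhs{g}{h}$ with the Wexler--Raz quantities, the density argument for (i)$\iff$(ii), the collapse of the series for (iii)$\Rightarrow$(ii), the use of injectivity of $c\mapsto\sum_{\lambda}c_{\lambda}\pi(\lambda)$ on $\ell^{1}$ for (ii)$\Rightarrow$(iii), and the chain (iii)$\Rightarrow$(iv)$\iff$(v) using the compatibility identity, $\mathrm{ran}\,\lhs{g}{h}\subseteq V$, and commutation with the $\La^{\circ}$-shifts are all correct.

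The genuine gap is exactly where you place it, but it is worse than you anticipate: your proposed closing step cannot work. From $P(g)=g$, i.e.\ $(\lhs{g}{h}-\mathrm{Id})g=0$, you want to extract the Wexler--Raz relations by ``linear independence of time-frequency shifts.'' Linear independence is a statement about \emph{operator} identities ($\sum_{\lambda}c_{\lambda}\pi(\lambda)=0$ in $\mathsf{B}(L^{2})$ forces $c=0$); it says nothing about an $\ell^{1}$-combination of shifts annihilating, or fixing, one particular vector or even the whole subspace $V$. Nor can ``fullness of the bimodule'' or faithfulness of the representation rescue the step, since both hold in the example below. In fact the implication (iv)$\Rightarrow$(iii) is false as the statement is written. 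Take $\alpha\beta>1$ and $g=g_{0}\otimes\mathds{1}_{\Z_{2}}\otimes\mathds{1}_{\Z_{3}}\otimes\cdots$ with $g_{0}(t)=e^{-\pi t^{2}}$. Since $\s(\La^{\circ})=1/(\alpha\beta)<1$, Corollary \ref{cor:gab-frame-for-adele} combined with the Lyubarskii--Seip theorem \cite{ly92,se92-1} shows that $\{\pi(\lambda^{\circ})g\}_{\lambda^{\circ}\in\La^{\circ}}$ is a frame for $L^{2}(\AQ)$; hence $V=L^{2}(\AQ)$, and $\gamma:=S_{g,\La^{\circ}}^{-1}g\in\SO(\AQ)$ by \cite{grle04}. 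Put $h:=\alpha\beta\,\gamma\in\SO(\AQ)$. Applying Lemma \ref{le:wex-raz} to the lattice $\La^{\circ}$ (whose adjoint is $\La$ and whose size is $1/(\alpha\beta)$) and the dual pair $(g,\gamma)$ gives $\langle\gamma,\pi(\lambda)g\rangle=\tfrac{1}{\alpha\beta}\,\delta_{\lambda,0}$ for all $\lambda\in\La$, whence $\langle g,\pi(\lambda)h\rangle=\delta_{\lambda,0}$ and therefore $\lhs{g}{h}=\pi(0)=\mathrm{Id}$. The identity operator is an idempotent from $L^{2}(\AQ)$ onto $V=L^{2}(\AQ)$, so (iv) and (v) hold; yet (iii) fails, because no Gabor system over $\La$ can even be a frame when $\s(\La)=\alpha\beta>1$, by the density theorem \cite{jale16-2}.

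What conditions (iv)--(v) actually express is that $g$ and $\s(\La)^{-1}h$ are \emph{oblique (subspace) dual} frame generators over the adjoint lattice $\La^{\circ}$ for the space $V$; this coincides with (i)--(iii) only under an extra hypothesis, e.g.\ that $\{\pi(\lambda^{\circ})g\}_{\lambda^{\circ}\in\La^{\circ}}$ is a Riesz sequence, in which case expansion coefficients in $V$ are unique and your pairing argument does yield biorthogonality. So the cycle you were trying to close cannot be closed from the stated hypotheses, and no proof attempt can succeed: either the proposition as reproduced in this paper drops a hypothesis or alters a definition (of $V$, or of which lattice indexes it) relative to the precise formulation in \cite{jalu18}, or the quoted source itself needs this correction. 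The constructive recommendation is to prove the correct equivalences (i)$\iff$(ii)$\iff$(iii), prove (iii)$\Rightarrow$(iv)$\iff$(v) as you did, and flag the converse as requiring the Riesz-sequence hypothesis.
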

We close with a result on projections in $\lmodule$ which follows from Theorem \ref{th:gab-frame-for-adele} by choosing for $g^{(\R)}$ the Gaussian $g^{(\R)}_0(t)=e^{-\pi t^2}$. The proof is analogous to the one of Proposition \ref{prop:proj-ncsol}. 
\begin{proposition}
Let $g \in \SO(\AQ)$ be the function defined by
\[g = g^{(\R)}_0\otimes \mathds{1}_{\Z_{2}}\otimes \mathds{1}_{\Z_{3}} \otimes \ldots ,\] 
where $g_0^{(\R)}$ denotes the Gaussian, and consider the lattice
\[ \Lambda = \varphi_{\alpha}(\Q)\times \varphi_{\beta}(\Q) = \big\{ ( \alpha q, (q)_{p}, \beta r, (r)_{p} ) : q,r\in \Q \big\} \subset \R\times\AQfin\times\R\times\AQfin.\]
Then $\lhs{S_{g,\Lambda}^{-1/2} g}{S_{g,\Lambda}^{-1/2} g}$ is a projection in $\lmodule$ if and only if $\alpha\beta<1$. 
\end{proposition}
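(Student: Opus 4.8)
The plan is to follow the proof of Proposition~\ref{prop:proj-ncsol} line for line, reducing the assertion to the Lyubarskii--Seip frame theorem for the Gaussian on $\R$ by means of three facts: the correspondence between projections of the form $\lhs{\gamma}{\gamma}$ and tight Gabor frames established in \cite{lu11} and recorded in the present setting by the preceding Proposition (the $\AQ$-analogue of \cite[Theorem~3.14]{jalu18}); the equivalence of Corollary~\ref{cor:gab-frame-for-adele}; and the fact that $S_{g,\Lambda}^{-1/2}g$ is the canonical tight window.

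First I would set $\gamma = S_{g,\Lambda}^{-1/2}g$ and note that $\lhs{\cdot}{\cdot}$ is conjugate-symmetric, so $\lhs{\gamma}{\gamma}$ is automatically self-adjoint in $\lmodule$; hence it is a projection precisely when it is idempotent. By the equivalence (iii)$\Leftrightarrow$(iv) of the preceding Proposition applied with $h=\gamma$, the element $\lhs{\gamma}{\gamma}$ is idempotent if and only if the pair $(\gamma,\gamma)$ is a dual pair of Gabor frame generators for $L^2(\AQ)$ with respect to $\Lambda$, that is, if and only if $S_{\gamma,\Lambda}=\mathrm{Id}$ and $\{\pi(\lambda)\gamma\}_{\lambda\in\Lambda}$ is a Parseval tight frame.

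Next I would verify that $\gamma = S_{g,\Lambda}^{-1/2}g$ is exactly the canonical tight window: whenever $g$ generates a Gabor frame the operator $S_{g,\Lambda}$ is bounded, positive and invertible, so $\gamma$ is well defined, it lies in $\SO(\AQ)$ by the Gr\"ochenig--Leinert theorem \cite{grle04} (so that $\lhs{\gamma}{\gamma}$ is meaningful), and $S_{\gamma,\Lambda}=S_{g,\Lambda}^{-1/2}S_{g,\Lambda}S_{g,\Lambda}^{-1/2}=\mathrm{Id}$. Conversely, if $g$ does not generate a frame then $S_{g,\Lambda}$ is not invertible and $\gamma$ fails to be a well-defined window, so no projection arises. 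Combining this with the previous step, $\lhs{S_{g,\Lambda}^{-1/2}g}{S_{g,\Lambda}^{-1/2}g}$ is a (well-defined) projection in $\lmodule$ if and only if $g$ generates a Gabor frame for $L^2(\AQ)$ with respect to $\Lambda$. By Corollary~\ref{cor:gab-frame-for-adele} the latter is equivalent to the Gaussian $g_0^{(\R)}$ generating a Gabor frame for $L^2(\R)$ over $\alpha\Z\times\beta\Z$, and by Lyubarskii--Seip this holds if and only if $\alpha\beta<1$, which is the claim.

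The genuinely delicate point is the ``only if'' direction in the regime $\alpha\beta\ge 1$: there the frame property fails, $S_{g,\Lambda}$ is not invertible, and one must interpret the statement as the non-existence of the canonical tight window $\gamma$ (hence of the projection) rather than as the failure of idempotency of an already-formed element. The remaining transfer of the projection/tight-frame dictionary of \cite{lu11} from the noncommutative-torus case to the present module over the (non-finitely-generated) noncommutative solenoid $\lmodule$ is not an additional obstacle here, since it is precisely what the preceding Proposition supplies; I would only check that its hypotheses apply to $\gamma$, which they do as $\gamma\in\SO(\AQ)$.
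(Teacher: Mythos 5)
Your proof is correct and takes essentially the same route as the paper's: the paper proves the $\R\times\Qp$ analogue (Proposition \ref{prop:proj-ncsol}) by invoking the projection/tight-frame correspondence of \cite{lu11}, the fact that $S_{g,\Lambda}^{-1/2}g$ is the canonical tight window, and the Lyubarskii--Seip theorem transferred through Theorem \ref{th:gab-frame-for-adele}, and then declares the adelic case analogous. Your version merely fills in details the paper leaves implicit (self-adjointness of $\lhs{\gamma}{\gamma}$, membership of $\gamma$ in $\SO(\AQ)$, and the non-existence of $S_{g,\Lambda}^{-1/2}g$ when $\alpha\beta\ge 1$), so no further comparison is needed.
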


\subsection*{Acknowledgments}
The work of M.S.J.\ was carried out during the tenure of the ERCIM ``Alain Bensoussan'' Fellowship Programme at NTNU. This work was finished while M.S.J.\ and F.L.\ were visiting NuHAG at the Faculty of Mathematics at the University of Vienna. We are thankful for their hospitality. The first author thanks Nadia Larsen and Tron Omland for valuable discussions about the groups $\AQ$ and $\R \times \Q_p$ and for pointing out that these groups admit lattices.

\bibliographystyle{abbrv}

\begin{thebibliography}{10}
{ \small

\bibitem{ahshsh18}
O.\ Ahmad, F.\ A.\ Shah, N.\ A.\ Sheik
\newblock Gabor frames on non-Archimedean fields
\newblock {\em International Journal of Geometric Methods in Modern Physics, 15 (05), 2018}

\bibitem{alevsk10}
S.\ Albeverio, S.\ Evdokimov, and M.\ Skopina.
\newblock p-adic Multiresolution Analysis and
Wavelet Frames
\newblock {\em Fourier Anal. Appl.}, 16(5), p.\ 693-714,2010.

\bibitem{bebe18}
J.~J. Benedetto and R.~L. Benedetto,
Frames of translates for number-theoretic groups, {\em{ArXiv e-prints}}, 2018.

\bibitem{ch16}
O.~{C}hristensen.
\newblock {\em {A}n {I}ntroduction to {F}rames and {R}iesz {B}ases}.
\newblock {A}pplied and {N}umerical {H}armonic {A}nalysis. {B}irkh{\"a}user
  {B}asel, {S}econd edition, 2016.

\bibitem{chgo15}
O.~{C}hristensen and S.~S. {G}oh.
\newblock {F}ourier-like frames on locally compact abelian groups.
\newblock {\em J. Approx. Theory}, 192(0):82 -- 101, 2015.

\bibitem{dasu16}
X.-R. {D}ai and Q.~{S}un.
\newblock {T}he $abc$-problem for {G}abor systems.
\newblock {\em Mem. Amer. Math. Soc.}, 244(1152):ix+99, 2016.

\bibitem{de10-3}
A.~{D}eitmar.
\newblock {\em {A}utomorphe {F}ormen}.
\newblock {S}pringer, {H}eidelberg, 2010.

\bibitem{fe81-2}
H.~G. {F}eichtinger.
\newblock {O}n a new {S}egal algebra.
\newblock {\em Monatsh. Math.}, 92:269--289, 1981.

\bibitem{fe83-4}
H.~G. {F}eichtinger.
\newblock {M}odulation spaces on locally compact {A}belian groups.
\newblock Technical report, {U}niversity of {V}ienna, {J}anuary 1983.

\bibitem{fe03-1}
H.~G. {F}eichtinger.
\newblock {M}odulation spaces of locally compact {A}belian groups.
\newblock In R.~{R}adha, M.~{K}rishna, and S.~{T}hangavelu, editors, {\em
  {P}roc. {I}nternat. {C}onf. on {W}avelets and {A}pplications}, pages 1--56,
  {C}hennai, {J}anuary 2002, 2003. {N}ew {D}elhi {A}llied {P}ublishers.

\bibitem{fe06}
H.~G. {F}eichtinger.
\newblock {M}odulation {S}paces: {L}ooking {B}ack and {A}head.
\newblock {\em Sampl. Theory Signal Image Process.}, 5(2):109--140, 2006.

\bibitem{fezi98}
H.~G. {F}eichtinger and G.~{Z}immermann.
\newblock {A} {B}anach space of test functions for {G}abor analysis.
\newblock In H.~G. {F}eichtinger and T.~{S}trohmer, editors, {\em {G}abor
  {A}nalysis and {A}lgorithms: {T}heory and {A}pplications}, {A}pplied and
  {N}umerical {H}armonic {A}nalysis, pages 123--170. {B}irkh{\"a}user {B}oston,
  1998.

\bibitem{gr98}
K.~{G}r{\"o}chenig.
\newblock {A}spects of {G}abor analysis on locally compact abelian groups.
\newblock In H.~G. {F}eichtinger and T.~{S}trohmer, editors, {\em {G}abor
  {A}nalysis and {A}lgorithms: {T}heory and {A}pplications}, {A}pplied and
  {N}umerical {H}armonic {A}nalysis, pages 123--170. {B}irkh{\"a}user {B}oston,
  1998.

\bibitem{gr01}
K.~{G}r{\"o}chenig.
\newblock {\em {F}oundations of {T}ime-{F}requency {A}nalysis}.
\newblock {A}ppl. {N}umer. {H}armon. {A}nal. {B}irkh{\"a}user, {B}oston,
  {M}{A}, 2001.

\bibitem{gr06-6}
K.~{G}r{\"o}chenig.
\newblock {A} pedestrian's approach to pseudodifferential operators.
\newblock In C.~{H}eil, editor, {\em {H}armonic {A}nalysis and {A}pplications},
  volume in {H}onor of {J}ohn {J}. {B}enedetto's 65th {B}irthday of {\em
  {A}ppl. {N}umer. {H}armon. {A}nal.}, pages 139--169. {B}irkh{\"a}user
  {B}oston, {B}oston, {M}{A}, 2006.

\bibitem{grle04}
K.~{G}r{\"o}chenig and M.~{L}einert.
\newblock {W}iener's lemma for twisted convolution and {G}abor frames.
\newblock {\em J. Amer. Math. Soc.}, 17:1--18, 2004.

\bibitem{grrost18}
K.~{G}r{\"o}chenig, J.~L. {R}omero, and J.~{S}t{\"o}ckler.
\newblock {S}ampling theorems for shift-invariant spaces, {G}abor frames, and
  totally positive functions.
\newblock {\em {I}nvent. {M}ath.}, 211(3):1119–1148, 2018.

\bibitem{grst13}
K.~{G}r{\"o}chenig and J.~{S}t{\"o}ckler.
\newblock {G}abor frames and totally positive functions.
\newblock {\em Duke Math. J.}, 162(6):1003--1031, 2013.

\bibitem{grst07}
K.~{G}r{\"o}chenig and T.~{S}trohmer.
\newblock {P}seudodifferential operators on locally compact abelian groups and
  {S}j{\"o}strand's symbol class.
\newblock {\em J. Reine Angew. Math.}, 613:121--146, 2007.

\bibitem{hani98}
V.\ P.\ Havin and N.\ K.\ Nikol'skij. \newblock {\em Commutative Harmonic Analysis II.
Group Methods in Commutative Harmonic Analysis. Transl.\ from the
Russian.} \newblock Springer, Berlin, 1998.

\bibitem{hero63}
E.\ Hewitt and K.\ A.\ Ross. 
\newblock {\em Abstract Harmonic Analysis I}. 
\newblock Number 115
in Grundlehren Math.\ Wiss.\ Springer, Berlin, 1963.

\bibitem{ja19}
M.~S. {J}akobsen.
\newblock {O}n a (no longer) {N}ew {S}egal {A}lgebra: {A} {R}eview of the
  {F}eichtinger {A}lgebra.
\newblock {\em J. Fourier Anal. Appl.}, pages 1 -- 82, 2018.

\bibitem{jale16-2}
M.~S. {J}akobsen and J.~{L}emvig.
\newblock {D}ensity and duality theorems for regular {G}abor frames.
\newblock {\em J. Funct. Anal.}, 270(1):229 -- 263, 2016.

\bibitem{jalu18}
M.~S. {J}akobsen and F.~{L}uef.
\newblock {D}uality of {G}abor frames and {H}eisenberg modules.
\newblock {\em {A}r{X}iv e-prints}, 2018.

\bibitem{ja03}
A.~J. E.~M. {J}anssen.
\newblock {Z}ak transforms with few zeros and the tie.
\newblock In H.~G. {F}eichtinger and T.~{S}trohmer, editors, {\em {A}dvances in
  {G}abor {A}nalysis}, {A}pplied and {N}umerical {H}armonic {A}nalysis, pages
  31--70. {B}irkh{\"a}user, {B}asel, 2003.

\bibitem{omland_aadic}
S.\ Kaliszewski, T.\ Omland and J.\ Quigg.
\newblock {Cuntz-{L}i algebras from {$a$}-adic numbers}. Volume~49 of {\em Rev.\ Roumaine Math.\ Pures Appl.}, 59(3), p.\ 331-370, 2014.


\bibitem{kisk10}
E.\ King and M.\ Skopina.
\newblock Quincunx Multiresolution Analysis for $L^
2(\mathbb{Q}_2^2)$
\newblock {\em p-Adic
Numbers Ultrametric Anal. Appl.}, 2(3), p.\ 222-231, 2010.

\bibitem{kaku98}
G.~{K}utyniok and E.~{K}aniuth.
\newblock {Z}eros of the {Z}ak transform on locally compact abelian groups.
\newblock {\em Proc. Amer. Math. Soc.}, 126(12):3561--3569, 1998.

\bibitem{lali12-1}
N.~{L}arsen and X.~{L}i.
\newblock {T}he 2-adic ring $C^*$-algebra of the integers and its
  representations.
\newblock {\em J. Funct. Anal.}, 262(4):1392--1426, 2012.

\bibitem{lapa13}
F.~{L}atremoliere and J.~{P}acker.
\newblock {N}oncommutative solenoids and their projective modules.
\newblock {\em {C}ommutative and {N}oncommutative {H}armonic {A}nalysis and
  {A}pplications}, 603:35--53, 2013.

\bibitem{lapa15}
F.~{L}atremoliere and J.~{P}acker.
\newblock {E}xplicit construction of equivalence bimodules between
  noncommutative solenoids.
\newblock {\em {C}ontemp. {M}ath., {A}mer. {M}ath. {S}oc.}, 650:111--140, 2015.

\bibitem{lapa17}
F.~{L}atremoliere and J.~{P}acker.
\newblock {N}oncommutative solenoids and the {G}romov-{H}ausdorff propinquity.
\newblock {\em Proc. Amer. Math. Soc.}, 145:2043--2057, 2017.

\bibitem{liji07}
D.\ Li and H.K.\ Jiang
\newblock Basic results of Gabor frame on local fields
\newblock {\em Chin. Ann. Math.}, 28(2), p.\ 165-176, 2007.

\bibitem{lo80}
V.~{L}osert.
\newblock {A} characterization of the minimal strongly character invariant
  {S}egal algebra.
\newblock {\em Ann. Inst. Fourier (Grenoble)}, 30:129--139, 1980.

\bibitem{lu11}
F.~{L}uef.
\newblock {P}rojections in noncommutative tori and {G}abor frames.
\newblock {\em Proc. Amer. Math. Soc.}, 139(2):571--582, 2011.

\bibitem{ly92}
Y.~I. {L}yubarskii.
\newblock {F}rames in the {B}argmann space of entire functions.
\newblock In {\em {E}ntire and {S}ubharmonic {F}unctions}, volume~11 of {\em
  {A}dv. {S}ov. {M}ath.}, pages 167--180. {A}merican {M}athematical {S}ociety
  ({A}{M}{S}), {P}rovidence, {R}{I}, 1992.

\bibitem{mapa05}
Y.~{M}anin and A.~{P}anchishkin.
\newblock {\em {I}ntroduction to modern number theory}, volume~49 of {\em
  {E}ncyclopaedia of {M}athematical {S}ciences}.
\newblock {S}pringer-{V}erlag, {B}erlin, {S}econd edition, 2005.
    
\bibitem{rava99}
D.~{R}amakrishnan and R.~J. {V}alenza.
\newblock {\em {F}ourier {A}nalysis on {N}umber {F}ields.}
\newblock {S}pringer, {N}ew {Y}ork, 1999.

\bibitem{re89}
H.~{R}eiter.
\newblock {\em {M}etaplectic {G}roups and {S}egal {A}lgebras}.
\newblock {L}ect. {N}otes in {M}athematics. {S}pringer, {B}erlin, 1989.

\bibitem{ri88}
M.~A. {R}ieffel.
\newblock {P}rojective modules over higher-dimensional noncommutative tori.
\newblock {\em Canad. J. Math.}, 40(2), p.\ 257-338, 1988.

\bibitem{se92-1}
K.~{S}eip.
\newblock {D}ensity theorems for sampling and interpolation in the
  {B}argmann-{F}ock space. {I}.
\newblock {\em J. Reine Angew. Math.}, 429:91--106, 1992.

\bibitem{sh16}
F.\ A.\ Shah.
\newblock Gabor frames on local fields of positive characteristic
\newblock {\em Tbilisi Mathematical Journal}, 9(2), p.\  129–139, 2016. 

\bibitem{sh18}
F.\ A.\ Shah.
\newblock Gabor-type expansions on local fields
\newblock {\em SeMa Journal}, 75(3), p.\  485-498, 2018. 

\bibitem{shsk18}
V.\ Shelkovich and M.\ Skopina.
\newblock p-Adic Haar Multiresolution Analysis and Pseudo-Differential Operators
\newblock {\em J.\ Four.\ Anal.\ Appl.} , 15(3), p.\ 366-393, 2009. 

}

\end{thebibliography}

\end{document}